\documentclass[12pt]{amsart}
\usepackage{amsmath,amssymb}
\usepackage{amsmath}
\usepackage{amsthm}
\usepackage{amsfonts}
\usepackage{graphicx}
\usepackage{float}
\usepackage{enumerate}
\usepackage{extarrows}
\usepackage{amssymb}
\usepackage{mathtools}
\usepackage{mathrsfs}
\usepackage{amsmath,amscd}
\usepackage{xy}
\xyoption{all}

\theoremstyle{plain}
\newtheorem{thm}{Theorem}[section]
\newtheorem{theorem}[thm]{Theorem}

\newtheorem{lemma}[thm]{Lemma}
\newtheorem{corollary}[thm]{Corollary}
\newtheorem{proposition}[thm]{Proposition}
\theoremstyle{definition}
\newtheorem{remark}[thm]{Remark}

\newtheorem{definition}[thm]{Definition}

\numberwithin{equation}{section}

\newcommand{\sD}{{\mathcal D}}

\newcommand{\sH}{{\mathcal H}}

\newcommand{\sL}{{\mathcal L}}

\newcommand{\sU}{{\mathcal U}}

\newcommand{\sW}{{\mathcal W}}

\newcommand{\F}{{\mathbb F}}

\newcommand{\N}{{\mathbb N}}

\newcommand{\R}{{\mathbb R}}

\newcommand{\fg}{{\mathfrak g}}

\newcommand{\fh}{{\mathfrak h}}

\newcommand{\lcross}{{>\!\!\!\triangleleft}}

\title[Deformation quantization of symplectic vector fields]{Deformation quantization of symplectic vector fields}
\author{Haoyuan Gao}
\begin{document}

\begin{abstract}
In this paper, we study deformation quantization of symplectic vector fields \`a la Fedosov. We show that each symplectic vector field can be quantized to a derivation of the deformed star algebra. Moreover, we show that this quantization yields a non-abelian $2$-cocycle on the Lie algebra of symplectic vector fields with values in the deformed star algebra. Therefore, we can quantize any Lie algebra action by symplectic vector fields.
\end{abstract}

\maketitle  \setcounter{tocdepth}{1}

\section{Introduction}

Deformation quantization was introduced by Bayen, Flato, Fronsdal, Lichnerowicz and Sternheimer in \cite{BFFLS}. For a Poisson manifold $(M, \{\cdot\,, \cdot\})$, a deformation quantization is an associative $\R[[\hbar]]$-bilinear product
\[\star : C^{\infty}(M)[[\hbar]] \times C^{\infty}(M)[[\hbar]] \to C^{\infty}(M)[[\hbar]], \,\,\,\, (f, g) \mapsto f \star g, \,\,\,\, \forall f, g \in C^{\infty}(M)[[\hbar]],\]
such that
\[f \star g = fg + B_1(f, g)\hbar + B_2(f, g)\hbar^2 + \cdots\]
for all $f, g \in C^{\infty}(M)$, where $B_q$ are smooth bi-differential operators, and
\[\frac{f \star g - g \star f}{\hbar}\Big|_{\hbar = 0} = \{f, g\},\]
i.e.,
\[B_1(f, g) - B_1(g, f) = \{f, g\}.\]
Sometimes we also require that the star product $\star$ preserves the unit, i.e.,
\[1 \star f = f \star 1 = f, \quad \forall f \in C^{\infty}(M)[[\hbar]].\]

The existence of deformation quantization of symplectic manifolds was first proved by De Wilde and Lecomte \cite{DL}. Later, Fedosov independently provided a geometric construction of deformation quantization of symplectic manifolds using symplectic connections \cite{F1,F2}. Moreover, given a sympletic manifold with a fixed symplectic connection, Fedosov provided a canonical way to quantize the symplectic manifold. The existence of deformation quantization of general Poisson manifolds was proved by Kontsevich as a corollary of his famous Formality Theorem \cite{K}.

Based on his construction of deformation quantization, Fedosov developed an algebraic version of the Atiyah-Singer index theorem \cite{F2}. Fedosov's index theorem and Nest-Tsygan's work \cite{NT} are called algebraic index theorem nowadays. A crucial part of the algebraic index theorem is the existence of a canonical trace on the deformed star algebra $\left(C^{\infty}(M)[[\hbar]], \star\right)$. Fedosov proved the existence of such a trace by considering deformation quantization of symplectic diffeomorphisms \cite{F2}. It is interesting to notice that Fedosov's deformation quantization of symplectic diffeomorphisms has applications in noncommutative geometry and quantum group theory \cite{TY,GY1}.

It is well known that the infinitesimal version of the group of symplectic diffeomorphisms is the Lie algebra of symplectic vector fields. A natural question is that given a symplectic vector field, is there a derivation of the deformed star algebra whose classical limit is the given symplectic vector field? The answer is positive by Kontsevich's Formality Theorem \cite{S}. In fact, Kontsevich's formality map can induce a linear map from the space of symplectic vector fields to the space of derivations of the deformed star algebra. However, this linear map does not preserve the Lie brackets.

In this paper, we provide a method to quantize symplectic vector fields \`a la Fedosov. Then we use this deformation quantization of symplectic vector fields to quantize Lie algebra actions. We point out that the quantization we construct does not preserve the Lie brackets if the chosen symplectic connection in Fedosov's deformation quantization is not invariant. However, the quantization always provides a non-abelian $2$-cocycle with values in the deformed star algebra. Therefore, we can use extension theory of Hopf algebra actions to obtain a formal deformation of the cross product algebra of a Lie algebra action by symplectic vector fields. In a forthcoming paper \cite{GY2}, we will generalize our construction to symplectic derivations in an algebraic setting, and use this method to study deformation of the Connes-Moscovici Hopf algebra $\sH_1$ of codimension $1$ foliations.

The organization of this paper is as follows. In Section 2, we review Fedosov's deformation quantization of symplectic manifolds. In Section 3, we provide a method to quantize a symplectic vector field by adding an ``inner'' derivation. The key ingredient here is a vanishing theorem for quantum algebra-valued de Rham cohomologies of positive degree proved by Fedosov. In Section 4, we construct a deformation quantization of Lie algebra actions by using that of symplectic vector fields introduced in Section 3. The key algebraic techniques are cocycle cross product of Hopf algebra actions and cleft extension.

\vspace{5mm}

{\em Acknowledgements:} The author would like to thank Yi-Jun Yao for many useful discussions and consistent support. This work began when the author did his postdoctoral research at Shanghai Center for Mathematical Sciences (SCMS). The author would like to thank SCMS for providing an excellent research environment.

\section{Fedosov manifolds and their deformation quantization}

In this section, we review Fedosov's geometric construction of deformation quantization of symplectic manifolds. We consider real-valued smooth functions in this section instead of complex-valued smooth functions as in Fedosov's original work.

Throughout this section, $(M, \omega)$ denotes a symplectic manifold of dimension $2n$. Let $f \in C^{\infty}(M)$ be a smooth function. We denote $X_f \in \Gamma(TM)$ the Hamiltonian vector field associated to $f$, i.e.,
\[\iota_{X_f}\omega = df,\]
or equivalently,
\[\omega\left(X_f, Y\right) = df(Y) = Yf, \quad \forall Y \in \Gamma(TM).\]
The canonical Poisson bracket on $M$ is given by
\[\{f, g\} = \omega\left(X_f, X_g\right), \quad \forall f, g \in C^{\infty}(M).\]
In local coordinates $\left(x^1, \dots, x^{2n}\right)$, let $\omega_{ij} = \omega\left(\frac{\partial}{\partial x^i}, \frac{\partial}{\partial x^j}\right)$. Then it is easy to see that the Poisson bracket $\left\{x^i, x^j\right\} = - \omega^{ij}$, where $\left(\omega^{ij}\right)$ is the inverse matrix of $\left(\omega_{ij}\right)$.

\begin{definition}
    An affine connection $\nabla: \Gamma(TM) \times \Gamma(TM) \to \Gamma(TM)$ on $(M, \omega)$ is called a \textit{symplectic connection} if it is torsion-free and $\nabla\omega = 0$. A triple $(M, \omega, \nabla)$ with $\nabla$ a chosen symplectic connection is called a \textit{Fedosov manifold}.
\end{definition}

\begin{remark}
    For any symplectic manifold, a symplectic connection always exists but is not unique, c.f. \cite{BCGRS}.
\end{remark}

In \cite{F1, F2}, Fedosov proved that $(M, \omega)$ always has a deformation quantization. Moreover, given a Fedosov manifold $(M, \omega, \nabla)$, Fedosov provided a canonical geometric construction of deformation quantization. We now briefly recall Fedosov's construction.

Let $x \in M$ be a point in $M$, the formal Weyl algebra $\sW_x$ at $x$ is defined to be the algebra of formal power series on the tangent space $T_xM$ endowed with the Moyal product associated to the symplectic vector space $\left(T_xM, \omega_x\right)$. More precisely, in local coordinates $\left(x^1, \cdots, x^{2n}\right)$ around $x$, $\left(y^1, \cdots, y^{2n}\right) = \left(dx^1|_x, \cdots, dx^{2n}|_x\right)$ is a basis of $T_x^*M$, i.e., a linear coordinate system of $T_xM$. Then for $a, b \in \sW_x$, we have
\begin{equation*}
    a = \sum\limits_{k, l \geq 0}a_{i_1 \cdots i_k, l}y^{i_1}\cdots y^{i_k}\hbar^l, \quad b = \sum\limits_{k, l \geq 0}b_{i_1 \cdots i_k, l}y^{i_1}\cdots y^{i_k}\hbar^l,
\end{equation*}
where $i_1, \cdots, i_k \in \{1, 2, \cdots, 2n\}$ and $a_{i_1 \cdots i_k, l}, b_{i_1 \cdots i_k, l} \in \R$. The Moyal product of $a$ and $b$ is given by
\[a \circ b = \sum\limits_{q = 0}^{\infty}\frac{1}{q!}\left(-\frac{\hbar}{2}\right)^q\omega_x^{i_1 j_1} \cdots \omega_x^{i_q j_q}\frac{\partial^q a}{\partial y^{i_1} \cdots \partial y^{i_q}}\frac{\partial^q b}{\partial y^{j_1} \cdots \partial y^{j_q}}.\]
The vector bundle
\[\sW = \bigsqcup\limits_{x \in M}T_xM\]
endowed with fiberwise Moyal product is called the \textit{formal Weyl algebra bundle}. Given a section $a : M \to \sW$, in a local coordinate chart $\left(U, x^1, \cdots, x^{2n}\right)$, we have
\begin{equation} \label{section-W}
    a = \sum\limits_{k, l \geq 0}a_{i_1 \cdots i_k, l}y^{i_1}\cdots y^{i_k}\hbar^l,
\end{equation}
where $a_{i_1 \cdots i_k, l}$ are functions on $U$, and $\left(y^1, \cdots, y^{2n}\right) = \left(dx^1, \cdots, dx^{2n}\right)$ is the associated local frame of $T^*M$ on $U$. The section $a$ is called \textit{smooth} if in any local coordinate chart, the functions $a_{i_1 \cdots i_k, l}$ are smooth functions of the local coordinates. We denote by $\Gamma(\sW)$ the space of smooth sections of $\sW$. For an open subset $U \subset M$, denote $\Gamma(U, \sW)$ the space of smooth sections of $\sW$ on $U$. Given two smooth sections $a, b \in \Gamma(\sW)$, it is clear that their Moyal product $a \circ b$ is also smooth. For a homogeneous element $a = a_0y^{i_1} \cdots y^{i_k}\hbar^l$, its total degree is defined to be $k + 2l$. Let
\[\sW_q = \left\{\sum\limits_{k + 2l \geq q}a_{i_1 \cdots i_k, l}y^{i_1}\cdots y^{i_k}\hbar^l \in \Gamma(\sW)\right\}.\]
We have a filtration in the algebra $\Gamma(\sW)$
\begin{equation} \label{filtration-W}
    \Gamma(\sW) = \sW_0 \supset \sW_1 \supset \sW_2 \supset \cdots
\end{equation}
that is compatible with the fiberwise Moyal product $\circ$.

\begin{remark} \label{rmk-center-W}
    Since $\omega$ is a nondegenerate $2$-form, the center of $\left(\Gamma(U, \sW), \circ\right)$ is $C^{\infty}(U)[[\hbar]]$ for any open subset $U \subset M$. Moreover, for $f \in C^{\infty}(U)[[\hbar]] \subset \Gamma(U, \sW)$ and $a \in \Gamma(U, \sW)$, their Moyal product $f \circ a$ coincides with the usual scalar product $fa$.
\end{remark}

We can also define $\sW$-valued smooth differential forms as smooth sections of $\sW \otimes \wedge^{\bullet}T^*M$. We denote $\wedge^{\bullet} =\wedge^{\bullet}T^*M$ for simplicity. For $a \in \Gamma\left(\sW \otimes \wedge^{\bullet}\right)$ and a local coordinate chart $\left(U, x^1, \cdots, x^{2n}\right)$, we have
\[a|_U = \sum \hbar^q a_{i_1 \cdots i_k, j_1 \cdots j_l, q}y^{i_1}\cdots y^{i_k}dx^{j_1} \wedge \cdots \wedge dx^{j_l}.\]
For two $\sW$-valued differential forms $a$ and $b$, the fiberwise Moyal product $\circ$ and the wedge product $\wedge$ together provide a product of $a$ and $b$. For simplicity, we denote $a \circ b$ this product. Then we denote $[a, b]$ their graded commutator, i.e., if $a \in \Gamma\left(\sW \otimes \wedge^k\right)$, $b \in \Gamma\left(\sW \otimes \wedge^l\right)$, then
\[[a, b] = a \circ b - (-1)^{kl}b \circ a,\]
and extended by linearity for non-homogeneous elements. Since the center of $\Gamma(\sW)$ is $C^{\infty}(M)[[\hbar]]$, we have, for $a \in \Gamma\left(\sW \otimes \wedge^{\bullet}\right)$,
\[[a, b] = 0, \quad \forall b \in \Gamma\left(\sW \otimes \wedge^{\bullet}\right) \Longleftrightarrow a|_{y = 0} = a, \, {\rm i.e.}, \, a \in \Gamma\left(\wedge^{\bullet}[[\hbar]]\right).\]
In other words, the (graded) center of $\Gamma\left(\sW \otimes \wedge^{\bullet}\right)$ is the space of scalar forms. For any open subset $U \subset M$, the (graded) center of $\Gamma\left(U, \sW \otimes \wedge^{\bullet}\right)$ is the space of scalar forms on $U$. The filtration (\ref{filtration-W}) induces a filtration in $\Gamma\left(\sW \otimes \wedge^{\bullet}\right)$. We denote this filtration by
\[\Gamma\left(\sW \otimes \wedge^{\bullet}\right) = \sW_0 \otimes \wedge^{\bullet} \supset \sW_1 \otimes \wedge^{\bullet} \supset \sW_2 \otimes \wedge^{\bullet} \supset \cdots.\]

From now on, we write $\sW$ and $\sW \otimes \wedge^{\bullet}$ to denote $\Gamma(\sW)$ and $\Gamma\left(\sW \otimes \wedge^{\bullet}\right)$ respectively. For $a \in \sW \otimes \wedge^{\bullet}$, denote $a_0 \coloneqq a|_{y = 0}$, and $a_{00} \coloneqq a|_{y = 0, dx = 0}$. We also use the notation $\sigma(a) = a_0$. Fedosov introduced the following operators
\[\delta : \sW \otimes \wedge^{\bullet} \to \sW \otimes \wedge^{\bullet + 1}, \quad \delta a = dx^i \wedge \frac{\partial a}{\partial y^i},\]
and
\[\delta^* : \sW \otimes \wedge^{\bullet} \to \sW \otimes \wedge^{\bullet - 1}, \quad \delta^*a = y^i\iota_{\frac{\partial}{\partial x^i}}a.\]
Here $\iota_{\frac{\partial}{\partial x^i}}$ is the interior product. For a homogeneous section $a = y^{i_1} \cdots y^{i_k}dx^{j_1} \wedge \cdots \wedge dx^{j_l}$, let
\begin{equation*}
    \delta^{-1}(a) =
    \begin{cases}
        0 & \text{if}\,\,k + l = 0,\\
        \frac{1}{k + l}\delta^*(a) & \text{else},
    \end{cases}
\end{equation*}
and extend $\delta^{-1}$ to general sections by linearity. Fedosov proved the following noncommutative version of the Hodge-de Rham decomposition.
\begin{lemma} \cite[Lemma 2.2]{F1}
    The above definition of the operators $\delta$ and $\delta^*$ is independent of the choice of local coordinate charts, i.e., they are globally defined. Moreover, we have
    \[\delta \circ \delta = 0 = \delta^* \circ \delta^*,\]
    and
    \[a = \delta\delta^{-1}a + \delta^{-1}\delta a + a_{00}, \quad \forall a \in \sW \otimes \wedge^{\bullet}.\]
\end{lemma}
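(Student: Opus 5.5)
The plan has three parts: show that $\delta$ and $\delta^*$ are intrinsic, check that they square to zero, and prove the decomposition with a weighted Euler identity.

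\emph{Global definition.} Under a change of coordinates $x \mapsto x'$ on overlapping charts, the fiber coordinates transform linearly: $y'^i = \frac{\partial x'^i}{\partial x^j}y^j$, and likewise $dx'^i = \frac{\partial x'^i}{\partial x^j}dx^j$ with the same Jacobian. The operator $\delta = dx^i \wedge \frac{\partial}{\partial y^i}$ contracts the tautological element $dx^i \otimes \frac{\partial}{\partial y^i}$, which is the identity endomorphism of $T^*M$ written in the frame. Since $\frac{\partial}{\partial y'^i} = \frac{\partial x^j}{\partial x'^i}\frac{\partial}{\partial y^j}$, the Jacobians cancel and $\delta$ is chart independent. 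For $\delta^* = y^i\iota_{\partial/\partial x^i}$ the argument is the same, using $\frac{\partial}{\partial x'^i} = \frac{\partial x^j}{\partial x'^i}\frac{\partial}{\partial x^j}$. Both operators preserve the bidegree-shifted spaces, so the bidegree $(k,l)$, with $k$ the $y$-degree and $l$ the form degree, is also intrinsic. Hence $\delta^{-1}$, which rescales by $1/(k+l)$, and the projection $a \mapsto a_{00}$ are globally defined.

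\emph{Squares.} Since $\frac{\partial}{\partial y^i}$ and $\frac{\partial}{\partial y^j}$ commute,
\[\delta^2 = dx^i \wedge dx^j\,\frac{\partial^2}{\partial y^i\partial y^j} = 0,\]
because the wedge is antisymmetric while the second derivative is symmetric. Similarly,
\[(\delta^*)^2 = y^iy^j\,\iota_{\partial/\partial x^i}\iota_{\partial/\partial x^j} = 0,\]
because the $y$'s commute in the symmetric algebra while the interior products anticommute.

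\emph{Decomposition.} This step carries the real content. I would compute the anticommutator $\delta\delta^* + \delta^*\delta$ on a monomial $y^{i_1}\cdots y^{i_k}dx^{j_1}\wedge\cdots\wedge dx^{j_l}$. Writing both operators in coordinates and using the graded commutation relations
\[\Bigl[\frac{\partial}{\partial y^i}, y^j\Bigr] = \delta_i^j, \qquad dx^i \wedge \iota_{\partial/\partial x^j} + \iota_{\partial/\partial x^j}\, dx^i \wedge {} = \delta^i_j,\]
one gets
\[\delta\delta^* + \delta^*\delta = y^i\frac{\partial}{\partial y^i} + dx^i \wedge \iota_{\partial/\partial x^i}.\]
The sign bookkeeping when moving $dx^i$ past $y^j\iota$ in the expansion is the only delicate point. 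The right-hand side is the sum of the two Euler operators, which acts on the monomial by the scalar $k+l$.

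Because $\delta$ and $\delta^*$ each preserve the total degree $k+l$ (one raises $l$ and lowers $k$, the other does the reverse), $\delta^{-1}$ commutes appropriately with them. On the summand with $k+l>0$ we therefore have
\[\delta\delta^{-1} + \delta^{-1}\delta = \frac{1}{k+l}\left(\delta\delta^* + \delta^*\delta\right) = \mathrm{id}.\]
On the summand with $k+l=0$ both $\delta$ and $\delta^{-1}$ vanish, and the element there is exactly $a_{00}$. Summing over bidegrees gives
\[a = \delta\delta^{-1}a + \delta^{-1}\delta a + a_{00}.\]
This summation is legitimate for formal series: the operators are linear, act coefficientwise in $\hbar$, and respect the total degree in $y$.
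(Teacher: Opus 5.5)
Your proof is correct and is the standard argument: the paper gives no proof of its own here, only the citation of Fedosov's Lemma 2.2, and Fedosov's proof proceeds the same way. It uses $\delta^2=(\delta^*)^2=0$ by contracting a symmetric index pair against an antisymmetric one, and $\delta\delta^*+\delta^*\delta=(k+l)\,\mathrm{id}$ on terms of $y$-degree $k$ and form degree $l$, from which the decomposition follows. One wording fix: the bidegree $(k,l)$ is intrinsic because the transition maps act linearly on the $y^i$ and on the $dx^i$ (as you already observed), not because $\delta$ and $\delta^*$ preserve anything.
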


\begin{remark}
    The operator $\delta$ is a graded derivation, i.e., for $a \in \sW \otimes \wedge^k$ and $b \in \sW \otimes \wedge^{\bullet}$, we have that
    \[\delta(a \circ b) = (\delta a) \circ b + (-1)^ka \circ (\delta b).\]
    However, $\delta^*$ is not a graded derivation. Moreover, we have
    \[\delta a = \left[\frac{1}{\hbar}\omega_{ij}y^i dx^j, a\right], \quad \forall a \in \sW \otimes \wedge^{\bullet}.\]
\end{remark}

The symplectic connection $\nabla$ also induces a graded derivation
\[\partial: \sW \otimes \wedge^{\bullet} \to \sW \otimes \wedge^{\bullet + 1}, \quad \partial a = dx^i \wedge \nabla_{\frac{\partial}{\partial x^i}}a.\]

Let $\left(x^1, \cdots, x^{2n}\right)$ be a Darboux coordinate system of an open subset $U \subset M$, and $\Gamma^k_{ij}$ the Christoffel symbols of the connection $\nabla$ with respect to this coordinate system, i.e.,
\[\nabla_{\frac{\partial}{\partial x^i}}\frac{\partial}{\partial x^j} = \Gamma^k_{ij}\frac{\partial}{\partial x^k}.\]
Denote
\[\Gamma_{ijk} = \omega_{il}\Gamma^l_{jk} = \omega\left(\frac{\partial}{\partial x^i}, \nabla_{\frac{\partial}{\partial x^j}}\frac{\partial}{\partial x^k}\right).\]
Let $R : \Gamma(TM) \times \Gamma(TM) \to {\rm End}(\Gamma(TM))$ be the curvature of $\nabla$. Let
\[R_{ijkl} = \omega\left(\frac{\partial}{\partial x^i}, R\left(\frac{\partial}{\partial x^k}, \frac{\partial}{\partial x^l}\right)\frac{\partial}{\partial x^j}\right)\]
be the coefficients of the (symplectic) Riemannian curvature. Fedosov introduced the following local $\sW$-valued differential forms
\begin{equation} \label{connection-curvature-form}
    \Gamma = \frac{1}{2}\Gamma_{ijk}y^iy^jdx^k, \quad R = \frac{1}{4}R_{ijkl}y^iy^jdx^k\wedge dx^l,
\end{equation}
and proved that
\begin{equation} \label{partial-Gamma-partial2-R}
    \partial a = d a - \left[\frac{1}{\hbar}\Gamma, a\right], \quad \partial^2 a= \partial \left(\partial a\right) = -\left[\frac{1}{\hbar}R, a\right]
\end{equation}
for all $a \in \Gamma\left(U, \sW \otimes \wedge^{\bullet}\right)$, where $d$ is the de Rham differential with respect to the coordinates $\left(x^1, \cdots, x^{2n}\right)$ of $U$. Note that the curvature form $R$ is globally defined because the center of $\Gamma\left(V, \sW \otimes \wedge^{\bullet}\right)$ is $\Gamma\left(V, \wedge^{\bullet}[[\hbar]]\right)$ for any open subset $V \subset M$, and $R|_{y = 0} = 0$ in any Darboux coordinate chart. However, the connection form $\Gamma \in \Gamma(U, \sW)$ is not globally defined because the de Rham differential $d$ depends on the choice of coordinate charts for $\sW$-valued differential forms.

\begin{remark} \label{rmk-Gamma-ijk-symmetric}
    As pointed in \cite{F1}, $\Gamma_{ijk}$ are totally symmetric with respect to $(i, j, k)$ in Darboux coordinates because $\nabla$ is a symplectic connection. One can check that $R_{ijkl}$ are symmetric with respect to $(i, j)$ and anti-symmetric with respect to $(k, l)$. The identities (\ref{partial-Gamma-partial2-R}) are different from the ones in \cite{F1} because we work with real-valued smooth functions.
\end{remark}

\begin{remark}
    A local frame $\left(e_1, \cdots, e_{2n}\right)$ of $TM$ on an open subset $U \subset M$ is called a \textit{symplectic frame} if $\omega_{ij} = \omega\left(e_i, e_j\right)$ are constants for all $i, j$. It is easy to see that $\nabla \omega = 0$ if and only if
    \[\Gamma_{ijk} = \Gamma_{kji}\]
    under any local symplectic frame $\left(e_1, \cdots, e_{2n}\right)$. Here $\nabla_{e_i}e_j = \Gamma^k_{ij}e_k$ and $\Gamma_{ijk} = \omega_{il}\Gamma^l_{jk}$. This observation is used in generalization of Fedosov's construction in some algebraic settings \cite{GY1, GY2}.
\end{remark}

Note that (\ref{partial-Gamma-partial2-R}) indicates that $\partial^2 \neq 0$ if the connection $\nabla$ is not flat. Fedosov proved that by adding an ``inner'' derivation to $\partial$, we can obtain an abelian connection whose parallel sections are in one-to-one correspondence with the formal power series $C^{\infty}(M)[[\hbar]]$.

\begin{theorem} \cite[Theorem 3.2]{F1} \label{thm-Fedosov-1}
    There is a unique $r \in \sW_3 \otimes \wedge^1$ satisfying the equations
    \[\delta r = R + \partial r - \frac{1}{\hbar}r^2, \quad \delta^{-1}r = 0.\]
    Moreover, the derivation
    \[D : \sW \otimes \wedge^{\bullet} \to \sW \otimes \wedge^{\bullet + 1}, \quad Da = -\delta a + \partial a - \left[\frac{1}{\hbar}r, a\right]\]
    satisfies $D^2 = D \circ D = 0$.
\end{theorem}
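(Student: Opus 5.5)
The plan is Fedosov's fixed-point iteration followed by a Bianchi-type argument for flatness. First I would convert the pair of conditions into a single recursive equation. If $r \in \sW_3 \otimes \wedge^1$ satisfies $\delta^{-1}r = 0$, then $r_{00}=0$ since $r$ is a $1$-form. The Hodge decomposition of \cite[Lemma 2.2]{F1} then gives $r = \delta^{-1}\delta r$. Hence the system is equivalent to the fixed-point equation
\[
r = \delta^{-1}R + \delta^{-1}\Big(\partial r - \frac{1}{\hbar}r^2\Big),
\]
provided we also check that every solution of this equation satisfies the original one.

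\emph{Existence and uniqueness of the fixed point.} The operator $\partial$ preserves the $\sW$-degree, and $\delta^{-1}$ raises the $y$-degree by one. The map $r\mapsto \frac1\hbar r^2$ sends $\sW_3$ to $\sW_4$, since $r^2=\frac12[r,r]$ only picks up terms with at least one contraction, each carrying a factor $\hbar$. So the map $r \mapsto \delta^{-1}R + \delta^{-1}(\partial r - \hbar^{-1} r^2)$ is a contraction for the filtration topology: two inputs that agree modulo $\sW_k$ give outputs agreeing modulo $\sW_{k+1}$. Also $\delta^{-1}R\in\sW_3$. The unique solution is therefore obtained as the limit of iterates starting from $\delta^{-1}R$. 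Every term here is globally defined: $\partial$, $\delta^{-1}$ and $R$ are all global, so $r$ is global. Uniqueness among elements with $\delta^{-1}r=0$ follows from the equivalence above.

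\emph{The fixed point solves the original equation.} Set
\[
A=\delta r - R - \partial r + \frac1\hbar r^2 .
\]
From $\delta^{-1}r = 0$ and $\delta^{-1}\delta^{-1}=0$ we get $\delta^{-1}A = 0$. Next I would compute $\delta A$. The inputs are:
\begin{itemize}
\item $\delta R=0$ and $\partial R=0$ (Bianchi identities, using torsion-freeness and $\nabla\omega=0$);
\item $\delta\partial+\partial\delta=0$ (torsion-freeness);
\item $\partial^2 = -[\hbar^{-1}R,\cdot]$;
\item $\delta$ and $\partial$ are graded derivations.
\end{itemize}
These yield an identity of the form $\delta A = \partial A - [\hbar^{-1}r, A]$. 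Since $A_{00}=0$, the decomposition gives
\[
A = \delta^{-1}\Big(\partial A - [\hbar^{-1}r,A]\Big).
\]
The right-hand side strictly raises filtration degree, so $A=0$. This is the step I expect to be the main obstacle, because the sign bookkeeping in deriving the identity for $\delta A$ is delicate. The reason is that our sign conventions differ from those in \cite{F1}.

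\emph{Flatness of $D$.} Locally write
\[
D = d - \Big[\hbar^{-1}\big(\Gamma + \omega_{ij}y^i dx^j + r\big),\ \cdot\ \Big].
\]
Then $D^2 = [\hbar^{-1}\Omega,\cdot]$, where the curvature $\Omega$ equals, up to sign, $\omega + R - \delta r + \partial r - \hbar^{-1}r^2$. Using the equation for $r$, this reduces to the scalar form $-\omega$, which is central. Hence $D^2=0$. Equivalently, one can expand $D^2$ directly using $\delta^2=0$, $\delta\partial+\partial\delta=0$, and $\partial^2=-[\hbar^{-1}R,\cdot]$, getting
\[
D^2 = -\Big[\hbar^{-1}\big(R - \delta r + \partial r - \hbar^{-1}r^2\big),\ \cdot\ \Big] = 0 .
\]
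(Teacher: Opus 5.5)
Your proposal is correct and is essentially Fedosov's original argument, which the paper does not reproduce but cites from \cite{F1}. The steps are the same: solve $r=\delta^{-1}R+\delta^{-1}(\partial r-\hbar^{-1}r^2)$ by iteration, then show that $A=\delta r-R-\partial r+\hbar^{-1}r^2$ satisfies $\delta^{-1}A=0$ and $\delta A=\partial A-[\hbar^{-1}r,A]$, so that $A=0$ (the identity for $\delta A$ does hold with this paper's sign conventions), and finally read off $D^2=-[\hbar^{-1}(R-\delta r+\partial r-\hbar^{-1}r^2),\cdot\,]=0$. The only slip is cosmetic: in your local curvature formula, with $\omega=\frac12\omega_{ij}dx^i\wedge dx^j$, the central term enters as $-\omega$ relative to $R-\delta r+\partial r-\hbar^{-1}r^2$, but this is harmless since $\omega$ is central, and your direct expansion of $D^2$ avoids it entirely.
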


Let $D$ be given by Theorem \ref{thm-Fedosov-1} which we call the Fedosov connection. Denote $\sW_D = {\rm Ker}\,D \cap \sW$.
\begin{theorem} \cite[Theorem 3.3]{F1} \label{thm-Fedosov-2}
    For any $a_0 \in C^{\infty}(M)[[\hbar]]$, there is a unique section $a \in \sW_D$ such that $\sigma(a) = a_0$.
\end{theorem}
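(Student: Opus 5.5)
We argue by the standard Fedosov iteration. Write $D = -\delta + \partial - \left[\frac{1}{\hbar}r, \cdot\right]$. For $a \in \sW$ the equation $Da = 0$ is equivalent to
\[\delta a = \partial a - \left[\frac{1}{\hbar}r, a\right] =: Q(a).\]
Since $a \in \sW \otimes \wedge^0$, we have $\delta^{-1}a = 0$. The decomposition lemma \cite[Lemma 2.2]{F1} then gives $a = \delta^{-1}\delta a + a_{00}$, and here $a_{00} = \sigma(a)$. So any solution with $\sigma(a) = a_0$ satisfies the fixed-point equation
\[a = a_0 + \delta^{-1}\left(\partial a - \left[\tfrac{1}{\hbar}r, a\right]\right).\]
Conversely, we will show that the unique solution of this fixed-point equation lies in ${\rm Ker}\, D$.

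\textbf{Existence and uniqueness of the fixed point.} The operator $\delta^{-1}$ raises the $y$-degree by one. The operator $\partial$ preserves total degree. Since $r \in \sW_3 \otimes \wedge^1$, the bracket $\frac{1}{\hbar}[r, \cdot]$ raises total degree by at least $1$: the commutator with respect to the Moyal product carries a factor of $\hbar$ (degree $2$), divided by $\hbar$, and $r$ starts in degree $3$ while the commutator involves at least one derivative in $y$ of each factor. Hence the map $K(a) = \delta^{-1}\left(\partial a - \left[\tfrac{1}{\hbar}r, a\right]\right)$ strictly increases the filtration degree. Therefore $a \mapsto a_0 + K(a)$ is a contraction for the $\sW_q$-filtration. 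Iterating $a^{(0)} = a_0$, $a^{(k+1)} = a_0 + K(a^{(k)})$ converges in the complete filtered space, and the fixed point is unique. Uniqueness of solutions in $\sW_D$ follows at once, because any such solution satisfies the fixed-point equation.

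\textbf{The fixed point is $D$-closed.} This is the main step. Let $a$ be the fixed point and set $b = Da \in \sW \otimes \wedge^1$. From the fixed-point equation we get $\delta^{-1}a = 0$ and $\sigma(a) = a_0$, and $\delta^{-1}(a - a_0) = 0$ trivially. We need $\delta^{-1}b = 0$; it comes from the following computation. Write $b = -\delta a + Q(a)$. Using the decomposition lemma on $a$,
\[\delta a = \delta\delta^{-1}Q(a) = Q(a) - \delta^{-1}\delta Q(a) - Q(a)_{00},\]
and $Q(a)_{00} = 0$ because $Q(a)$ is a $1$-form. Hence $b = \delta^{-1}\delta Q(a)$, so $\delta^{-1}b = 0$, since $(\delta^{-1})^2 = 0$ (it follows from $(\delta^*)^2 = 0$).

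Next, $D^2 = 0$ from Theorem \ref{thm-Fedosov-1} gives $Db = 0$, that is,
\[\delta b = \partial b - \left[\tfrac{1}{\hbar}r, b\right] = Q(b).\]
Applying the decomposition lemma to $b$, and using $b_{00} = 0$ and $\delta^{-1}b = 0$, we get
\[b = \delta^{-1}\delta b = \delta^{-1}Q(b) = K(b).\]
Since $K$ strictly raises filtration degree, $b \in \bigcap_q \sW_q \otimes \wedge^1 = 0$. Hence $Da = 0$.

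The care needed is mainly in the degree count showing that $K$ raises the filtration. The key point is that $\frac{1}{\hbar}[r, \cdot]$ raises degree precisely because $r \in \sW_3$.
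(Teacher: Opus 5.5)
Your proposal is correct and is essentially Fedosov's original argument for \cite[Theorem 3.3]{F1}, which the paper cites without reproducing. You solve the fixed-point equation $a = a_0 + \delta^{-1}\bigl(\partial a - [\frac{1}{\hbar}r, a]\bigr)$ uniquely by iteration, since the operator raises filtration degree because $r \in \sW_3 \otimes \wedge^1$. You then show $b = Da$ satisfies $\delta^{-1}b = 0$ and, via $D^2 = 0$, the homogeneous equation $b = \delta^{-1}\bigl(\partial b - [\frac{1}{\hbar}r, b]\bigr)$, which forces $b = 0$.
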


By Theorems \ref{thm-Fedosov-1} and \ref{thm-Fedosov-2}, we obtain an associative product on $C^{\infty}(M)[[\hbar]]$
\begin{equation} \label{star-product-Fedosov}
    a \star b \coloneqq \sigma\left(\sigma^{-1}(a) \circ \sigma^{-1}(b)\right), \quad \forall a, b \in C^{\infty}(M)[[\hbar]].
\end{equation}
In fact, the definition of $\partial$, $\delta$ and $\delta^{-1}$ and the formula in the original proof of Theorem \ref{thm-Fedosov-1} and Theorem \ref{thm-Fedosov-2} indicate that (\ref{star-product-Fedosov}) is a deformation quantization of $(M, \omega)$. We refer to \cite{F1} for details. We call this the (canonical) Fedosov deformation quantization of the Fedosov manifold $(M, \omega, \nabla)$.

For discussion in next sections, we need to recall an extension $\sW^{+}$ of the algebra $\sW$ introduced by Fedosov as follows:
\begin{itemize}
    \item[(i)] Elements $a \in \sW^{+}$ are given by the series (\ref{section-W}), but the powers of $\hbar$ can be both positive and negative.
    \item[(ii)] The total degree $k + 2l$ of any term of the series is nonnegative.
    \item[(iii)] There exists a finite number of terms with a given nonnegative total degree.
\end{itemize}

It is clear that $\sW^+$ is also an algebra with respect to the fiberwise Moyal product, and the operators $\delta$, $\delta^{-1}$, $\partial$ and $D$ also act on $\sW^+$. We also have a filtration
\[\sW^+ = \sW^+_0 \supset \sW^+_1 \supset \sW^+_2 \supset \cdots\]
with respect to the total degree. Moreover, Fedosov proved the following lemma.
\begin{lemma} \cite[Lemma 4.2]{F1} \label{lemma-D-W-+}
    Let $a \in \sW^+$ and $Da = 0$. Then $a$ does not contain negative powers of $\hbar$, i.e., $a \in \sW_D \subset \sW \subset \sW^+$.
\end{lemma}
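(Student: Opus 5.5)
The plan is to show that the negative powers of $\hbar$ in $a$ vanish one power at a time, starting from the most negative one. Recall that $D = -\delta + \partial - [\frac{1}{\hbar}r,\cdot]$ with $r \in \sW_3\otimes\wedge^1$. Each piece of $D$ acts on total degree as follows:
\begin{itemize}
\item $\delta$ lowers the total degree by exactly $1$;
\item $\partial$ preserves the total degree;
\item $[\frac{1}{\hbar}r,\cdot]$ does not lower the total degree, since the bracket contributes at least one factor of $\hbar$ (degree $2$), $\frac{1}{\hbar}$ costs $2$, and the $y$-derivatives remove at most what $r$ (of degree $\geq 3$) contributes.
\end{itemize}
So for homogeneous components we may write $D = -\delta + (\text{terms of nondecreasing degree})$. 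Also, none of the operators $\delta,\partial,[\frac1\hbar r,\cdot]$ changes the power of $\hbar$ in a way that could produce more negative powers: $\delta$ and $\partial$ preserve the $\hbar$-power, and the commutator with $\frac1\hbar r$ multiplies by $\hbar^{-1}\cdot\hbar^{\geq 1}$, so it does not decrease the $\hbar$-power.

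First I reduce to a single lowest $\hbar$-power. Write $a = \sum_{l\ge -N} a^{(l)}\hbar^l$, where $a^{(l)}$ has no $\hbar$. Condition (ii) forces $a^{(l)}$ to have $y$-degree at least $-2l$. Suppose $N>0$ and $a^{(-N)}\neq 0$. Extract the coefficient of $\hbar^{-N}$ in the equation $Da=0$. Only $a^{(-N)}$ contributes to this coefficient, except possibly through the $\hbar^{-1}\cdot\hbar$ bracket term. In that term, the Moyal commutator of $\frac1\hbar r$ with $a^{(-N)}\hbar^{-N}$ starts at order $\hbar^{-N}$, and the parts of $r$ with positive $\hbar$-power raise the power. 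This gives the equation
\[
-\delta a^{(-N)} + \partial a^{(-N)} - \{r^{(0)}, a^{(-N)}\}_{\text{P}} = 0,
\]
where $\{r^{(0)}, a^{(-N)}\}_{\text{P}}$ denotes the leading (Poisson-type) part of the bracket. This says $a^{(-N)}\hbar^{-N}$ satisfies the classical limit of the Fedosov equation, i.e. the $\hbar=0$ version $D_0 b = 0$.

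Next I show that this forces $a^{(-N)}$ to be a function. Since $a^{(-N)}$ is a $0$-form, $\delta^{-1}a^{(-N)}=0$. Apply the Hodge decomposition $b = \delta\delta^{-1}b + \delta^{-1}\delta b + b_{00}$ to get
\[
b - b_{00} = \delta^{-1}\bigl(\partial b - \{r^{(0)}, b\}_{\text{P}}\bigr).
\]
The right side raises the $y$-degree by at least $1$ relative to the lowest-degree part of $b - b_{00}$. Iterating in the filtration then shows $b$ is uniquely determined by $b_{00}$, as in Theorem \ref{thm-Fedosov-2}. Here, however, $b=a^{(-N)}$ has $y$-degree at least $2N>0$ by (ii), so $b_{00}=0$. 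Let $k$ be the lowest $y$-degree occurring in $b$. The identity above gives $b_k = 0$, so $b=0$, contradicting $a^{(-N)}\neq 0$. Condition (iii) guarantees that this lowest-degree argument makes sense, since finitely many terms occur in each degree. Inducting downward on $N$ removes every negative power, so $a\in\sW$, hence $a\in\sW_D$.

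The main obstacle is justifying rigorously that the $\hbar^{-N}$ coefficient of $Da$ involves only $a^{(-N)}$ and an operator that does not lower the $y$-degree. In particular, I need to check that the commutator with $\frac1\hbar r$, which involves the $\hbar^{-1}$ factor, produces no contributions from the higher components $a^{(l)}$, $l>-N$, at order $\hbar^{-N}$. This is true because the Moyal commutator always carries at least one $\hbar$, which cancels the $\hbar^{-1}$. I would verify this carefully, along with the degree bookkeeping for $\delta^{-1}$ applied to this classical operator.
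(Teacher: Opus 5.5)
Your argument has a genuine gap at the first step. You write $a=\sum_{l\ge -N}a^{(l)}\hbar^l$ with $N$ finite, i.e.\ you assume the powers of $\hbar$ in $a$ are bounded below. Conditions (i)--(iii) defining $\sW^+$ do not imply this.

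For instance, $\sum_{l\ge 0}\hbar^{-l}(y^1)^{3l}$ lies in $\sW^+$: the term $\hbar^{-l}(y^1)^{3l}$ has total degree $l\ge 0$, and each total degree contains exactly one term. Yet the negative powers of $\hbar$ are unbounded. This is not an artificial case. Exponentials $\exp\bigl(\frac{1}{\hbar}H\bigr)$ with $H\in\sW_3$, which arise in Fedosov's quantization of symplectomorphisms (the reason $\sW^+$ was introduced), have exactly this shape. For such $a$ there is no leading coefficient $a^{(-N)}$, so your downward induction has nowhere to start.

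When a lowest power does exist, the rest of your argument is correct. The $\hbar^{-N}$ coefficient of $Da$ is $(-\delta+\partial)a^{(-N)}$ minus the first-order Moyal term of $\frac1\hbar[r^{(0)},a^{(-N)}]$. The classical uniqueness argument then kills $a^{(-N)}$, because its $y$-degree is at least $2N>0$. (Incidentally, the lowest $y$-degree of $b$ exists simply because $y$-degrees are nonnegative integers; condition (iii) plays no role there.)

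The fix is to induct on the total degree, which is well-ordered, instead of on the $\hbar$-power. This is essentially Fedosov's proof, which the paper quotes. Since $a$ is a $0$-form, $\delta^{-1}a=0$, and $Da=0$ together with the Hodge decomposition gives
\[
a=\sigma(a)+\delta^{-1}\Bigl(\partial a-\Bigl[\frac{1}{\hbar}r,a\Bigr]\Bigr).
\]
By (ii), a term containing no $y$ must have nonnegative $\hbar$-power, so $\sigma(a)$ has no negative powers of $\hbar$. The operator $Q=\delta^{-1}(\partial-[\frac1\hbar r,\cdot\,])$ raises total degree by at least $1$. It never lowers the $\hbar$-power, as you correctly observed piece by piece.

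Write $a=\sum_{m\ge0}a_m$ by total degree, where each $a_m$ is a finite sum by (iii). Suppose $m_0$ were the least total degree for which $a_{m_0}$ contains a negative power of $\hbar$. Then $a_{m_0}=\sigma(a)_{m_0}+\bigl(Q\sum_{m<m_0}a_m\bigr)_{m_0}$, and the right-hand side contains no negative powers, a contradiction. Equivalently, $a$ is the unique solution in $\sW^+$ of this fixed-point equation, obtained by iterating $Q$ on $\sigma(a)$, and the iteration never creates negative powers of $\hbar$.
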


\begin{remark}
    Fedosov introduced the algebra $\sW^+$ to provide a method to quantize symplectic diffeomorphisms \cite{F1}, based on which he constructed a canonical trace on the deformed star algebra (\ref{star-product-Fedosov}). Fedosov's method is also used in study of deformation theory in noncommutative geometry and quantum groups \cite{TY, GY1}.
\end{remark}

\section{Deformation quantization of symplectic vector fields}

In this section, we provide a method to quantize symplectic vector fields \`a la Fedosov. Throughout this section, $(M, \omega, \nabla)$ denotes a Fedosov manifold. We use notations $\delta$, $\delta^{-1}$, $\partial$, $D$, $\sW_D$$\cdots$ as in Section 2.

We first recall some standard formulas concerning the Lie derivative and the de Rham differential.

\begin{lemma} \label{lemma-L-iota}
    Let $X, Y \in \Gamma(TM)$. Then we have
    \begin{itemize}
        \item [(i)] $\sL_X = \left[\iota_X, d\right] = \iota_X \circ d + d \circ \iota_X : \Gamma\left(\wedge^{\bullet}T^*M\right) \to \Gamma\left(\wedge^{\bullet}T^*M\right)$;
        \item [(ii)] $\left[\sL_X, \iota_Y\right] = \sL_X \circ \iota_Y - \iota_Y \circ \sL_X = \iota_{[X, Y]} : \Gamma\left(T^{\bullet}T^*M\right) \to \Gamma\left(T^{\bullet - 1}T^*M\right)$, where
        \[T^{\bullet}T^*M = \bigoplus\limits_{q = 0}^{\infty}\left(T^*M\right)^{\otimes q} = \bigoplus\limits_{q = 0}^{\infty}\underbrace{T^*M \otimes \cdots \otimes T^*M}_{q \,\, {\rm times}}\]
        is the tensor bundle of the cotangent bundle $T^*M$.
    \end{itemize}
    Here
    \[\iota_X : \Gamma\left(T^{\bullet}T^*M\right) \to \Gamma\left(T^{\bullet - 1}T^*M\right)\]
    is the interior product defined by
    \[(\iota_X\tau)(Y_1, \cdots, Y_{k - 1}) = \tau(X, Y_1, \cdots, Y_{k - 1}), \quad \forall \tau \in \Gamma\left(\left(T^*M\right)^{\otimes k}\right),\]
    for all $Y_1, \cdots, Y_{k - 1} \in \Gamma(TM)$.
\end{lemma}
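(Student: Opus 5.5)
Part (i) is Cartan's magic formula, and part (ii) is the commutation rule of the Lie derivative with contraction. Both are standard. For each I will reduce to a local check on generators, using that both sides are local operators with the appropriate derivation property.

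For (i), I first observe that $\iota_X$ and $d$ are graded derivations of $\Gamma(\wedge^{\bullet}T^*M)$, of degrees $-1$ and $+1$ respectively. Hence their graded commutator $[\iota_X, d] = \iota_X d + d\iota_X$ is an even derivation of degree $0$. The Lie derivative $\sL_X$ is also an even derivation of degree $0$. Both operators are local and $\R$-linear, so it suffices to check that they agree on functions $f$ and on exact $1$-forms $df$, since locally these generate the algebra. On functions: $(\iota_X d + d\iota_X)f = \iota_X df = Xf = \sL_X f$. On $df$: $(\iota_X d + d\iota_X)df = d(Xf) = d\sL_X f = \sL_X df$, where I use $d^2 = 0$ and the fact that $\sL_X$ commutes with $d$. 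The latter follows from defining $\sL_X$ as $\frac{d}{dt}\big|_{t=0}\phi_t^*$ for the local flow $\phi_t$ of $X$, because pullback commutes with $d$.

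For (ii), I work on the full tensor bundle $T^{\bullet}T^*M$ and use the characterization of $\sL_X$ as a derivation of tensor fields that commutes with contractions. For $\tau \in \Gamma((T^*M)^{\otimes k})$ and vector fields $Y_1, \dots, Y_{k-1}$, the derivation property gives
\[(\sL_X\tau)(Y, Y_1, \dots, Y_{k-1}) = X\bigl(\tau(Y, Y_1, \dots)\bigr) - \tau([X,Y], Y_1, \dots) - \sum_i \tau(Y, \dots, [X, Y_i], \dots).\]
Meanwhile,
\[(\sL_X\iota_Y\tau)(Y_1, \dots) = X\bigl(\tau(Y, Y_1, \dots)\bigr) - \sum_i \tau(Y, \dots, [X, Y_i], \dots).\]
Subtracting the first identity from the second leaves exactly $\tau([X,Y], Y_1, \dots) = (\iota_{[X,Y]}\tau)(Y_1, \dots)$. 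This is the required identity.

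There is no genuine obstacle here. The only point needing care is to fix a definition of $\sL_X$ on arbitrary covariant tensors, namely via the flow, and to derive from it the Leibniz formula above. That formula comes from differentiating $\phi_t^*\tau$ evaluated on $\phi_{t*}$-transported vector fields, using $\frac{d}{dt}\big|_{t=0}\phi_t^*Y = [X,Y]$. Once that formula is in hand, both parts are direct computations.
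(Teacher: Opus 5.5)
Your proof is correct and follows the paper's route. The paper gives no argument for (i) beyond calling it the Cartan homotopy formula, and for (ii) says only that it can be checked by the global formula for the Lie derivative (citing \cite{LPV}), which is exactly the subtraction you carry out; your derivation-on-generators argument for (i) fills in the standard proof the paper omits, and you correctly get $d\circ\sL_X = \sL_X\circ d$ directly from the flow rather than from Corollary~\ref{corollary-d-L-X-L-X-d}, which the paper deduces from (i), so there is no circularity.
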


The first formula in Lemma \ref{lemma-L-iota} is the Cartan homotopy formula. The second formula can be checked by the global formula for the Lie derivative. We refer to \cite{LPV} for a proof. The following corollary is immediate from (i) of Lemma \ref{lemma-L-iota}.

\begin{corollary} \label{corollary-d-L-X-L-X-d}
    Let $X \in \Gamma(TM)$. Then we have
    \[d \circ \sL_X = \sL_X \circ d : \Gamma\left(\wedge^{\bullet}T^*M\right) \to \Gamma\left(\wedge^{\bullet + 1}T^*M\right).\]
\end{corollary}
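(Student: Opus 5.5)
The corollary asserts $d\circ\sL_X = \sL_X\circ d$ on $\Gamma(\wedge^\bullet T^*M)$. I will derive it purely formally from the Cartan homotopy formula (i) of Lemma \ref{lemma-L-iota}, using only $d\circ d = 0$.

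Fix $X \in \Gamma(TM)$ and a form $\alpha \in \Gamma(\wedge^k T^*M)$. By (i) of Lemma \ref{lemma-L-iota}, $\sL_X = \iota_X\circ d + d\circ\iota_X$ on forms of every degree. I will compose with $d$ on each side and expand.

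On the left,
\[
d(\sL_X\alpha) = d\,\iota_X d\alpha + d\,d\,\iota_X\alpha = d\,\iota_X d\alpha,
\]
since $d^2=0$. On the right, applying the Cartan formula to the $(k+1)$-form $d\alpha$ gives
\[
\sL_X(d\alpha) = \iota_X d\,d\alpha + d\,\iota_X d\alpha = d\,\iota_X d\alpha,
\]
again by $d^2 = 0$. The two expressions coincide, so $d\circ\sL_X = \sL_X\circ d$.

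The argument uses only $d^2=0$ and that the Cartan formula holds in every degree, including degree $0$, where $\iota_X$ vanishes on functions. No genuine obstacle arises; the one point to note is that (i) is applied twice, once to $\alpha$ and once to $d\alpha$, so the identity is needed in all degrees simultaneously.
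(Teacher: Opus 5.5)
Your proof is correct and is exactly the argument the paper intends: the paper states the corollary is immediate from the Cartan formula (i) of Lemma \ref{lemma-L-iota}, and your two-line computation reducing both $d\circ\sL_X$ and $\sL_X\circ d$ to $d\,\iota_X d$ via $d^2=0$ spells out that immediacy.
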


\begin{definition}
    A smooth vector field $X \in \Gamma(TM)$ on $(M, \omega)$ is called a \textit{symplectic vector field} if
    \[\sL_X \omega = 0.\]
    Equivalently, $X$ is symplectic if the flow of $X$ consists of local symplectic diffeomorphisms.
\end{definition}

Note that if $(C^{\infty}(N)[[\hbar]], \star)$ is a deformation quantization of a Poisson manifold $(N, \{\cdot\,,\cdot\})$, and
\begin{equation} \label{derivation-star}
    \widetilde{X} = X_0 + X_1\hbar + X_2 \hbar^2 + \cdots
\end{equation}
is a derivation of $(C^{\infty}(N)[[\hbar]], \star)$, then we must have that
\[X_0 \in \Gamma(TN),\]
and
\[\sL_{X_0}\{\cdot \,,\cdot\} = 0.\]
In the case that $N$ is symplectic and $\{\cdot\, ,\cdot\}$ is induced by the symplectic form, it is equivalent to say that $X_0$ is a symplectic vector field. In the following, we show that if the star product $\star$ is the Fedosov deformation quantization (\ref{star-product-Fedosov}), then given a symplectic vector field $X_0$, we can extend $X_0$ to a derivation of $(C^{\infty}(M)[[\hbar]], \star)$ of the form (\ref{derivation-star}).

\begin{lemma} \label{lemma-L-X}
    Let $X \in \Gamma(TM)$ be a symplectic vector field on $M$. The Lie derivative along $X$ gives a derivation
    \[\sL_X : \sW \otimes \wedge^{\bullet} \to \sW \otimes \wedge^{\bullet}\]
    that preserves both the symmetric degree and the anti-symmetric degree.
\end{lemma}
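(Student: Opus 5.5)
We identify $\sW \otimes \wedge^{\bullet}$ with a space of tensor fields and define $\sL_X$ as the usual Lie derivative of tensors. Concretely, a section of $\sW \otimes \wedge^{\bullet}$ is a formal series in $\hbar$ whose coefficients are sections of $S^k T^*M \otimes \wedge^l T^*M$. Here the fiber variables $y^i = dx^i$ are viewed as symmetric covectors, and the $dx^j$ as antisymmetric ones. The Lie derivative $\sL_X$ acts on each bundle $S^kT^*M \otimes \wedge^lT^*M$, a subbundle of the tensor bundle $T^{\bullet}T^*M$ of Lemma~\ref{lemma-L-iota}, and it maps this bundle into itself. Extending $\R[[\hbar]]$-linearly, and termwise on series, gives a well-defined operator that preserves both the symmetric degree $k$ and the anti-symmetric degree $l$. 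This operator is intrinsic, since $\sL_X$ on tensors does not depend on coordinates.

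In a local chart the formula is
\[\sL_X a = X^m \frac{\partial a}{\partial x^m} + \frac{\partial X^i}{\partial x^m}\, y^m \frac{\partial a}{\partial y^i} + (\text{the analogous term acting on the } dx\text{'s}),\]
which makes the degree-preservation and locality transparent. Since $\sL_X$ is a derivation of the tensor product, it is a derivation of the commutative symmetric-tensor product of the $y$'s, and it is an even derivation of the wedge product.

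The real content is that $\sL_X$ is a derivation of the fiberwise Moyal product $\circ$ (combined with $\wedge$). We plan to write
\[a \circ b = \sum_q \frac{1}{q!}\left(-\frac{\hbar}{2}\right)^q \mathrm{m}\left(P^q(a \otimes b)\right),\]
where $P$ is the bidifferential operator obtained by contracting $\pi = \omega^{ij}\partial_{y^i}\otimes\partial_{y^j}$. Intrinsically, $P$ is contraction with the bivector $\omega^{-1}$ applied to one symmetric slot of $a$ and one symmetric slot of $b$ (up to combinatorial factors). The Lie derivative commutes with tensor contractions and satisfies the Leibniz rule over tensor products. Hence
\[\sL_X\bigl(P(a\otimes b)\bigr) = P(\sL_X a \otimes b) + P(a\otimes \sL_X b) + (\sL_X\omega^{-1})\text{-contraction}.\]
The last term vanishes because $\sL_X\omega = 0$ implies $\sL_X\omega^{-1}=0$.

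Iterating over $q$ and using the Leibniz rule for the final multiplication $\mathrm{m}$ and for the wedge product, we get $\sL_X(a\circ b) = (\sL_X a)\circ b + a\circ \sL_X b$. No sign appears because $\sL_X$ has form degree $0$.

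The main obstacle is making the contraction argument precise: the Moyal operator must be expressed as a coordinate-free contraction of tensors with $\omega^{-1}$, so that naturality of $\sL_X$ applies. Alternatively, we can check directly in coordinates that
\[\sL_X\bigl(\omega^{ij}\,\partial_{y^i}a\,\partial_{y^j}b\bigr) = \omega^{ij}\bigl(\partial_{y^i}\sL_X a\,\partial_{y^j} b + \partial_{y^i}a\,\partial_{y^j}\sL_X b\bigr).\]
This uses $[\sL_X, \partial_{y^i}] = -\partial_i X^m \partial_{y^m}$ together with
\[X^m\partial_m\omega^{ij} - \partial_m X^i\omega^{mj} - \partial_m X^j \omega^{im} = 0,\]
which is precisely $\sL_X\omega^{-1}=0$. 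This check is the step I would carry out carefully. The higher-order terms then follow by induction on $q$.
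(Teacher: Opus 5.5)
Your argument is correct, but it takes a different route from the paper's main proof.

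\textbf{The paper's route.} The paper argues through the flow. Since $X$ is symplectic, each $\theta_t$ is a local symplectomorphism, so the pullback $\theta_t^*\colon\Gamma(\sD_{-t},\sW\otimes\wedge^{\bullet})\to\Gamma(\sD_t,\sW\otimes\wedge^{\bullet})$ is an algebra isomorphism. The paper writes $(\sL_X a)_p=\frac{d}{dt}\Big|_{t=0}(\theta_t^*a)_p$ and differentiates $(\theta_t^*(a\circ b))_p=(\theta_t^*a)_p\circ(\theta_t^*b)_p$, where the product at $p$ is the fixed bilinear map built from $\omega_p$. This gives the Leibniz rule at once, and degree preservation holds by definition.

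\textbf{Your route.} Your proof is the infinitesimal form of the same idea. The paper uses naturality of the Moyal product under linear symplectic isomorphisms. You instead use that $\sL_X$ is a derivation of the tensor algebra commuting with contractions. You write the Moyal product as iterated contraction with $\omega^{-1}$ followed by multiplication, and the extra term vanishes because $\sL_X\omega^{-1}=0$. Your coordinate variant is essentially the alternative the paper sketches in Remark~\ref{rmk-L-X}. There it is done in Darboux coordinates, where $X^m\partial_m\omega^{ij}=0$ and your identity reduces to (\ref{L-X-omege-0-local}).

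\textbf{What each buys.} The flow argument is shorter and hides all index bookkeeping. However, it relies on flows, their domains $\sD_t$, and the standard fact that the fiberwise Moyal product is natural under the linear symplectic maps $d\theta_t$. Your argument is purely algebraic and needs no flows. It would therefore transfer to settings without flows, such as the algebraic symplectic derivations the author mentions for \cite{GY2}. It also yields the commutator $[\sL_X,\partial_{y^i}]=-\frac{\partial X^m}{\partial x^i}\partial_{y^m}$, which the paper has to derive separately in the proof of Proposition~\ref{prop-[D-L-X]-eta-X}.

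\textbf{One point to tidy.} For the induction on $q$, state the commutation of $P$ with the Lie derivative on the two-factor tensor product. That is, show $\sL_X(P(c))=P(\sL_X c)$ for arbitrary $c\in\sW\otimes_{C^{\infty}(M)}\sW$, not only for $\mathrm{m}\circ P$ on pure tensors, since $P^{q-1}(a\otimes b)$ is not a pure tensor. Your coordinate computation proves exactly this with no change.
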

\begin{proof}
    Let $\theta : \sD \to M$ be the (maximal) flow of $X$, where the open subset $\sD \subset \R \times M$ is the domain of the flow. Denote
    \[\sD_t = \left\{p \in M | (t, p) \in \sD\right\},\]
    and
    \[\theta_t : \sD_t \to \sD_{-t}, \quad \theta_t(p) = \theta(t, p), \quad \forall p \in \sD_t\]
    for each $t \in \R$. Since $X$ is a symplectic vector field, we have
    \[\theta_t^*\left(\omega|_{\sD_{-t}}\right) = \omega|_{\sD_t}, \quad \forall t \in \R.\]
    Therefore, we have algebra isomorphisms
    \[\theta_t^* : \Gamma\left(\sD_{-t}, \sW \otimes \wedge^{\bullet}\right) \to \Gamma\left(\sD_t, \sW \otimes \wedge^{\bullet}\right)\]
    for all $t \in \R$. Then for $a \in \sW \otimes \wedge^{\bullet}$ and $p \in M$, by definition we have
    \begin{equation*}
        \begin{aligned}
            \left(\sL_X a\right)_p = &\lim\limits_{t \to 0}\frac{\left(d\theta_t\right)_p^*\left(a_{\theta_t(p)}\right) - a_p}{t}\\
            = &\frac{d}{dt}\Big|_{t = 0}\left(\theta_t^* a\right)_p.
        \end{aligned}
    \end{equation*}
    And for $a, b \in \sW \otimes \wedge^{\bullet}$, $p \in M$, we have
    \begin{equation*}
        \begin{aligned}
            \left(\sL_X (a \circ b)\right)_p = &\frac{d}{dt}\Big|_{t = 0}\left(\theta_t^* (a \circ b)\right)_p\\
            = &\frac{d}{dt}\Big|_{t = 0}\left(\left(\theta_t^* a\right) \circ \left(\theta_t^*b\right) \right)_p\\
            = &\frac{d}{dt}\Big|_{t = 0}\left(\theta_t^* a\right)_p \circ b_p + a_p \circ \frac{d}{dt}\Big|_{t = 0}\left(\theta_t^* b\right)_p\\
            = &\left(\sL_Xa\right)_p \circ b_p + a_p \circ \left(\sL_X b\right)_p.
        \end{aligned}
    \end{equation*}
    So,
    \begin{equation} \label{L-X-a-circ-b}
        \sL_X(a \circ b) = \left(\sL_X a\right) \circ b + a \circ \left(\sL_X b\right).
    \end{equation}
    Therefore, $\sL_X$ is a derivation of the algebra $\sW \otimes \wedge^{\bullet}$. By definition, $\sL_X$ preserves both the symmetric degree and the anti-symmetric degree.
\end{proof}

\begin{remark} \label{rmk-L-X}
    The above lemma can also be proved using the global formula for the Lie derivative
    \[(\sL_X \omega)(Y, Z) = X\omega(Y, Z) - \omega(\sL_X Y, Z) - \omega(Y, \sL_X Z), \quad \forall Y, Z \in \Gamma(TM).\]
    Hence $\sL_X \omega = 0$ if and only if in any Darboux coordinates $\left(x^1, \cdots, x^{2n}\right)$, we have
    \begin{equation} \label{L-X-omege-0-local}
        \frac{\partial X^k}{\partial x^i}\omega_{kj} = \frac{\partial X^k}{\partial x^j}\omega_{ki},
    \end{equation}
    for all $i$ and $j$, where $X = X^k \frac{\partial}{\partial x^k}$. Then by (\ref{L-X-omege-0-local}) and the fact (see the proof of Proposition \ref{prop-[partial-L-X]} later) that
    \[\sL_X dx^i = \frac{\partial X^i}{\partial x^j}dx^j, \quad \sL_X y^i = \frac{\partial X^i}{\partial x^j}y^j, \quad \forall i,\]
    one can check (\ref{L-X-a-circ-b}) in local Darboux coordinates.
\end{remark}

If $X, Y \in \Gamma(TM)$ are both sympletic vector fields, it is clear that $[X, Y]$ is also symplectic. Denote by $\Gamma^{\omega}(TM)$ the Lie algebra of symplectic vector fields. Now let $X \in \Gamma^{\omega}(TM)$, we have the Lie derivative $\sL_X \nabla$ of the connection $\nabla$ defined by
\[\left(\sL_X \nabla\right)(Y, Z) = [X, \nabla_Y Z] - \nabla_{[X, Y]}Z - \nabla_Y[X, Z], \quad \forall Y, Z \in \Gamma(TM).\]
Note that $\sL_X \nabla \in \Gamma\left((T^*M)^{\otimes 2} \otimes TM\right)$ is a tensor field instead of a connection, and then we have a connection
\begin{equation} \label{def-nable-X}
    \nabla^X \coloneqq \nabla + \sL_X \nabla.
\end{equation}
Since $X$ is a symplectic vector and $\nabla$ is a symplectic connection, it is straightforward to check that the connection $\nabla^X$ is also symplectic. Let $\partial^X$ be the graded derivation induced by $\nabla^X$, i.e.,
\begin{equation} \label{def-partial-X}
    \partial^X : \sW \otimes \wedge^{\bullet} \to \sW \otimes \wedge^{\bullet + 1}, \quad \partial^X a = dx^i \wedge \nabla^X_{\frac{\partial}{\partial x^i}}a.
\end{equation}

Let $\left(U, x^1, \cdots, x^{2n}\right)$ be a Darboux coordinate chart. Then we have the connection form $\Gamma \in \Gamma\left(U, \sW_2 \otimes \wedge^1\right)$ of the connection $\nabla$ and the connection form $\Gamma^X \in \Gamma\left(U, \sW_2 \otimes \wedge^1\right)$ of the connection $\nabla^X$ as defined in (\ref{connection-curvature-form}). Then by (\ref{partial-Gamma-partial2-R}), we have that
\[\left(\partial - \partial^X\right)a = \left[\frac{1}{\hbar}\left(\Gamma^X - \Gamma\right), a\right], \quad \forall a \in \Gamma\left(U, \sW \otimes \wedge^{\bullet}\right).\]
Since $\partial$ and $\partial^X$ are globally defined, the center of $\Gamma\left(V, \sW \otimes \wedge^{\bullet}\right)$ is $\Gamma\left(V, \wedge^{\bullet}[[\hbar]]\right)$ for any open subset $V \subset M$, and ${\Gamma^X}|_{y = 0} = \Gamma |_{y = 0} = 0$ in any local Darboux coordinate chart, we have that the difference $\Gamma^X - \Gamma$ defines a global $\sW$-valued $1$-form, i.e.,
\[\Gamma^X - \Gamma \in \sW_2 \otimes \wedge^1.\]
Then let
\begin{equation} \label{def-eta-X}
    \eta_X = \Gamma^X - \Gamma + \sL_X r \in \sW_2 \otimes \wedge^1,
\end{equation}
where $r \in \sW_3 \otimes \wedge^1$ is given by Theorem \ref{thm-Fedosov-1}. Then consider an element $u_X \in \sW^+_1$ defined by
\begin{equation} \label{def-u-X}
    u_X = \delta^{-1}\left((D + \delta)u_X + \frac{1}{\hbar}\eta_X\right).
\end{equation}
In the following, we show that the derivation
\[\sL_X + [u_X, \cdot\,] : \sW^+ \to \sW^+\]
maps the subalgebra $\sW_D$ to itself.

We need the following lemma.

\begin{lemma} \label{lemma-nablaX-minus-nabla}
    Let $X \in \Gamma^{\omega}(TM)$ be a symplectic vector field. Let $Y \in \Gamma(TM)$ be a smooth vector field. Then the covariant derivatives
    \[\nabla^X_Y : \Gamma\left(T^{\bullet}T^*M\right) \to \Gamma\left(T^{\bullet}T^*M\right)\]
    and
    \[\nabla_Y : \Gamma\left(T^{\bullet}T^*M\right) \to \Gamma\left(T^{\bullet}T^*M\right)\]
    satisfy
    \[\nabla^X _Y = \nabla_Y + [\sL_X, \nabla_Y] - \nabla_{[X, Y]}.\]
\end{lemma}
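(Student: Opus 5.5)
The plan is to show that both sides of the claimed identity are derivations of the full tensor algebra that commute with contractions. They will then be compared on functions and on vector fields, which determines them on all of $T^{\bullet}T^*M$. Concretely, set
\[A_Y \coloneqq [\sL_X, \nabla_Y] - \nabla_{[X, Y]}.\]
The goal is $\nabla^X_Y = \nabla_Y + A_Y$ on $\Gamma\left(T^{\bullet}T^*M\right)$. Throughout, $\nabla^X_Y$ on covariant tensors means the covariant derivative induced by the connection $\nabla^X$ of (\ref{def-nable-X}) in the usual way, i.e. through the Leibniz rule and compatibility with contractions.

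\textbf{Step 1 (functions and vector fields).} On a function $f$ we get
\[A_Y f = X(Yf) - Y(Xf) - [X, Y]f = 0,\]
and likewise $\nabla^X_Y f = Yf = \nabla_Y f$. On a vector field $Z$, $\sL_X Z = [X, Z]$, so
\[A_Y Z = [X, \nabla_Y Z] - \nabla_Y [X, Z] - \nabla_{[X, Y]}Z = \left(\sL_X \nabla\right)(Y, Z).\]
By the definition (\ref{def-nable-X}) this says exactly $\nabla^X_Y Z = \nabla_Y Z + A_Y Z$.

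\textbf{Step 2 (structural properties of $A_Y$).} Both $\sL_X$ and $\nabla_Y$ are degree-zero derivations of the tensor algebra (over vector fields and forms) that commute with contractions. Their commutator $[\sL_X, \nabla_Y]$ therefore has the same two properties, and so does $\nabla_{[X, Y]}$. Hence $A_Y$ is a derivation commuting with contractions. By Step 1 it annihilates functions, so it is $C^\infty(M)$-linear, i.e. tensorial.

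The same two properties hold for $\nabla^X_Y - \nabla_Y$, and this difference also vanishes on functions.

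\textbf{Step 3 (one-forms and general tensors).} For a $1$-form $\alpha$ and a vector field $Z$, apply to the function $\alpha(Z)$ both the derivation property and commutation with contraction. For $B = A_Y$ and for $B = \nabla^X_Y - \nabla_Y$ this gives
\[(B\alpha)(Z) = B(\alpha(Z)) - \alpha(BZ) = -\alpha(BZ).\]
Step 1 shows that $A_Y Z = \left(\nabla^X_Y - \nabla_Y\right)Z$, so the two operators agree on $1$-forms. A section of $\left(T^*M\right)^{\otimes k}$ is locally a sum of products $f\,\alpha_1 \otimes \cdots \otimes \alpha_k$. Both operators are derivations that vanish on functions and agree on $1$-forms, so by the Leibniz rule they agree on all of $\Gamma\left(T^{\bullet}T^*M\right)$. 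Both operators are local, so the local computation suffices, and the lemma follows.

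The only point needing care is Step 2, namely that the commutator of two derivations commuting with contractions again commutes with contractions. This is a one-line check: $[\sL_X, \nabla_Y] \circ C = C \circ [\sL_X, \nabla_Y]$ for any contraction $C$, since each factor commutes with $C$. For $\sL_X$ this standard property is the same one underlying (ii) of Lemma \ref{lemma-L-iota}.
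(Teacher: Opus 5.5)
Your proof is correct and follows the same route as the paper. Both arguments use the derivation property to reduce to one-forms, then evaluate on an arbitrary vector field $Z$. The difference is only in the one-form step: the paper expands $([\sL_X, \nabla_Y]\alpha)(Z)$ term by term, while you get $(A_Y\alpha)(Z) = -\alpha((\sL_X\nabla)(Y,Z))$ directly from the fact that $A_Y$ commutes with contractions and kills functions. You also make explicit the agreement of the two sides on functions, which the paper leaves tacit.
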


\begin{proof}
    Since $\nabla^X _Y$, $\nabla_Y$, $\sL_X$ and $\nabla_{[X, Y]}$ are derivations, it suffices to show that
    \[\nabla^X _Y \alpha = \nabla_Y \alpha + [\sL_X, \nabla_Y]\alpha - \nabla_{[X, Y]}\alpha, \quad \forall \alpha \in \wedge^1 = \Gamma\left(T^* M\right).\]
    Let $\alpha \in \wedge^1$ and $Z \in \Gamma(TM)$ be arbitrary. By definition, we have
    \begin{equation} \label{lemma-nablaX-minus-nabla-eq1}
        \begin{aligned}
            \left(\nabla^X_Y \alpha\right)(Z) = & Y(\alpha(Z)) - \alpha\left(\nabla^X_Y Z\right)\\
            = &Y(\alpha(Z)) - \alpha\left(\nabla_Y Z + [X, \nabla_Y Z] - \nabla_{[X, Y]}Z - \nabla_Y[X, Z]\right),
        \end{aligned}
    \end{equation}
    and
    \begin{equation} \label{lemma-nablaX-minus-nabla-eq2}
        \begin{aligned}
            \left(\nabla_Y \alpha\right)(Z) = Y(\alpha(Z)) - \alpha(\nabla_Y Z),
        \end{aligned}
    \end{equation}
    and
    \begin{equation} \label{lemma-nablaX-minus-nabla-eq3}
        \begin{aligned}
            ([\sL_X, \nabla_Y]\alpha)(Z) = &(\sL_X(\nabla_Y \alpha))(Z) - (\nabla_Y(\sL_X \alpha))(Z)\\
            = &X((\nabla_Y \alpha)(Z)) - (\nabla_Y \alpha)([X, Z]) - Y((\sL_X \alpha)(Z)) + (\sL_X \alpha)(\nabla_Y Z)\\
            = &XY(\alpha(Z)) - X(\alpha(\nabla_Y Z)) - Y(\alpha([X, Z])) + \alpha(\nabla_Y [X, Z])\\
            &- YX(\alpha(Z)) + Y(\alpha([X, Z])) + X(\alpha(\nabla_Y Z)) - \alpha([X, \nabla_Y Z])\\
            = &[X, Y](\alpha(Z)) + \alpha(\nabla_Y[X, Z]) - \alpha([X, \nabla_Y Z]),
        \end{aligned}
    \end{equation}
    and
    \begin{equation} \label{lemma-nablaX-minus-nabla-eq4}
        \begin{aligned}
            \left(\nabla_{[X, Y]}\alpha\right)(Z) = &[X, Y](\alpha(Z)) - \alpha\left(\nabla_{[X, Y]}Z\right).
        \end{aligned}
    \end{equation}
    Combining (\ref{lemma-nablaX-minus-nabla-eq1}), (\ref{lemma-nablaX-minus-nabla-eq2}), (\ref{lemma-nablaX-minus-nabla-eq3}) and (\ref{lemma-nablaX-minus-nabla-eq4}), we have
    \[\left(\nabla^X _Y \alpha\right)(Z) = (\nabla_Y \alpha)(Z) + ([\sL_X, \nabla_Y]\alpha)(Z) - \left(\nabla_{[X, Y]}\alpha\right)(Z).\qedhere\]
\end{proof}

Then we have the following proposition concerning the commutator of the Lie derivative $\sL_X$ and the derivation $\partial$.

\begin{proposition} \label{prop-[partial-L-X]}
    For $X \in \Gamma^{\omega}(TM)$ and $a \in \sW^+ \otimes \wedge^{\bullet}$, we have that
    \[[\partial, \sL_X]a = \partial(\sL_X a) - \sL_X(\partial a) = \left(\partial - \partial^X\right)a = \left[\frac{1}{\hbar}\left(\Gamma^X - \Gamma\right), a\right].\]
\end{proposition}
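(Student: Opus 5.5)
The third equality is already available: the paper has observed, from (\ref{partial-Gamma-partial2-R}) applied to both $\nabla$ and $\nabla^X$, that $(\partial-\partial^X)a=\left[\frac{1}{\hbar}(\Gamma^X-\Gamma),a\right]$. This holds on any Darboux chart and extends verbatim to $\sW^+\otimes\wedge^\bullet$, because the Moyal commutator with $\frac1\hbar\Gamma$ lowers no total degree below zero. So the content is the identity $[\partial,\sL_X]=\partial-\partial^X$.

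I will prove it as an identity of operators on $\sW^+\otimes\wedge^\bullet$. Both sides are graded derivations of degree $+1$ with respect to the product $\circ\wedge$. For the left side this follows because $\partial$ is a graded derivation of odd degree and $\sL_X$ is an even derivation (Lemma \ref{lemma-L-X}); for the right side it is a difference of two such derivations. Both sides are also $\hbar$-linear. Hence it suffices to check the identity on local generators over a Darboux chart $U$: functions $f$, the forms $dx^j$, and the fiber coordinates $y^j$. A general element is a (locally convergent in the filtration) combination of products of these. The locality of both operators lets us work on $U$, and the Leibniz rule handles the rest, since the symmetric product of $y$'s agrees with the Moyal product up to lower-order commutator terms that are again products of generators and scalars.

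\textbf{Functions.} Here $\partial f=df$ and $\partial^X f=df$. Also $[\partial,\sL_X]f=d(Xf)-\sL_X df=0$ by Corollary \ref{corollary-d-L-X-L-X-d}.

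\textbf{The forms $dx^j$.} Write $\partial$ on $\wedge^\bullet$-part as $dx^i\wedge\nabla_{\partial_i}$. Since $\nabla$ is torsion-free, $\partial$ acts on scalar forms as $d$, and so does $\partial^X$, because $\nabla^X$ is torsion-free (being symplectic). Then $[\partial,\sL_X]dx^j=d\sL_X dx^j-\sL_X d dx^j=0$, again by Corollary \ref{corollary-d-L-X-L-X-d}. This equals $(\partial-\partial^X)dx^j=0$.

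\textbf{The fiber coordinates $y^j$.} This is the main computation. Regard $y^j$ as the $1$-form $dx^j$ placed in the symmetric factor, so that $\nabla_Y$ and $\sL_X$ act on it as on $T^*M$, with $\sL_Xy^j=\frac{\partial X^j}{\partial x^k}y^k$. Then
\[\partial\sL_X y^j-\sL_X\partial y^j=dx^i\wedge\nabla_{\partial_i}\sL_Xy^j-\sL_X(dx^i)\wedge\nabla_{\partial_i}y^j-dx^i\wedge\sL_X\nabla_{\partial_i}y^j.\]
Lemma \ref{lemma-L-iota}(ii) gives $\sL_X\circ\iota$ versus $\iota\circ\sL_X$, which I use in the form $\sL_X(dx^i)\otimes\partial_i=-dx^i\otimes[X,\partial_i]$; this is the invariance of the identity tensor. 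Applying it rewrites the middle term as $dx^i\wedge\nabla_{[X,\partial_i]}y^j$. The total becomes $-dx^i\wedge\big([\sL_X,\nabla_{\partial_i}]-\nabla_{[X,\partial_i]}\big)y^j$. By Lemma \ref{lemma-nablaX-minus-nabla} this equals $-dx^i\wedge(\nabla^X_{\partial_i}-\nabla_{\partial_i})y^j=(\partial-\partial^X)y^j$.

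I expect the main obstacle to be bookkeeping in this last step, namely correctly commuting $\sL_X$ past the $dx^i$ in the definition of $\partial$. One must ensure the coordinate-dependent expression $dx^i\wedge\nabla_{\partial_i}$ is handled invariantly, which is exactly what the identity $\sL_X(dx^i\otimes\partial_i)=0$ achieves. A secondary point is justifying the reduction to generators in $\sW^+$. I would handle it by noting that both operators preserve the total-degree filtration up to a fixed shift, so agreement on generators plus the Leibniz rule gives agreement degree by degree.
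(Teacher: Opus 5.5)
Your proof is correct. Its core, the computation for $y^j$, is exactly the paper's proof: you use $\sL_X(dx^i)\otimes\partial_i=-dx^i\otimes[X,\partial_i]$ to move $\sL_X$ past the $dx^i$ in $\partial$, and then Lemma~\ref{lemma-nablaX-minus-nabla}.

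The difference is that the paper runs this same computation once for an arbitrary $a\in\sW^+\otimes\wedge^{\bullet}$. Nothing in your $y^j$ computation uses that $a=y^j$: Lemma~\ref{lemma-nablaX-minus-nabla} holds on the whole tensor algebra, and $\hbar$ is inert. So your reduction to generators can be dropped, along with everything it costs you:
\begin{itemize}
\item showing that both sides are graded derivations of $\circ\wedge$, which for $\partial^X$ needs $\nabla^X$ symplectic or the inner-derivation formula;
\item the separate checks on $f$ and $dx^j$, which use torsion-freeness of both connections, whereas the direct computation does not need it for the identity $[\partial,\sL_X]=\partial-\partial^X$;
\item the Weyl-symmetrization remark and the filtration argument for passing from polynomials to $\sW^+$.
\end{itemize}
Those steps are all valid as you sketch them, but they are unnecessary.
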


\begin{proof}
    In local Darboux coordinates $\left(U, x^1, \cdots, x^{2n}\right)$, write $X = X^i \frac{\partial}{\partial x^i}$ with $X^i \in C^{\infty}(U)$. Then we have
    \begin{equation} \label{bracket-X-partial-x-i}
        \left[X, \frac{\partial}{\partial x^i}\right] = - \frac{\partial X^j}{\partial x^i}\frac{\partial}{\partial x^j}, \quad \forall i.
    \end{equation}
    By Corollary \ref{corollary-d-L-X-L-X-d}, we have that
    \begin{equation} \label{L-X-d-x-i}
        \sL_X dx^i = d\left(\sL_X x^i\right) = d X^i = \frac{\partial X^i}{\partial x^j}dx^j, \quad \forall i.
    \end{equation}
    Then by \ref{def-partial-X} and Lemma \ref{lemma-nablaX-minus-nabla}, we have
    \begin{equation*}
        \begin{aligned}
            \partial^X a = &dx^i \wedge \nabla^X_{\frac{\partial}{\partial x^i}}a\\
            = &dx^i \wedge \nabla_{\frac{\partial}{\partial x^i}}a + dx^i \wedge \left[\sL_X, \nabla_{\frac{\partial}{\partial x^i}}\right]a - dx^i \wedge \nabla_{\left[X, \frac{\partial}{\partial x^i}\right]}a\\
            = &\partial a + dx^i \wedge \sL_X\left(\nabla_{\frac{\partial}{\partial x^i}}a\right) - dx^i \wedge \nabla_{\frac{\partial}{\partial x^i}}(\sL_X a) - dx^i \wedge \nabla_{\left[X, \frac{\partial}{\partial x^i}\right]}a\\
            = &\partial a - \partial(\sL_X a) + dx^i \wedge \sL_X\left(\nabla_{\frac{\partial}{\partial x^i}}a\right) - dx^i \wedge \nabla_{\left[X, \frac{\partial}{\partial x^i}\right]}a.
        \end{aligned}
    \end{equation*}
    Then by (\ref{bracket-X-partial-x-i}) and (\ref{L-X-d-x-i}), we have
    \begin{equation*}
        \begin{aligned}
            \partial^X a = &\partial a - \partial(\sL_X a) + dx^i \wedge \sL_X\left(\nabla_{\frac{\partial}{\partial x^i}}a\right) - dx^i \wedge \nabla_{\left[X, \frac{\partial}{\partial x^i}\right]}a\\
            = &\partial a - \partial(\sL_X a) + dx^i \wedge \sL_X\left(\nabla_{\frac{\partial}{\partial x^i}}a\right) + \frac{\partial X^j}{\partial x^i}dx^i \wedge \nabla_{\frac{\partial}{\partial x^j}}a\\
            = &\partial a - \partial(\sL_X a) + dx^i \wedge \sL_X\left(\nabla_{\frac{\partial}{\partial x^i}}a\right) + \sL_X\left(dx^j\right) \wedge \nabla_{\frac{\partial}{\partial x^j}}a\\
            = &\partial a - \partial(\sL_X a) + \sL_X(\partial a).
        \end{aligned}
    \end{equation*}
    Therefore, we obtain
    \[[\partial, \sL_X]a = \partial(\sL_X a) - \sL_X(\partial a) = \left(\partial - \partial^X\right)a = \left[\frac{1}{\hbar}\left(\Gamma^X - \Gamma\right), a\right]. \qedhere\]
\end{proof}

Then we have the following proposition concerning the commutator of the Lie derivative $\sL_X$ and the Fedosov connection $D$.

\begin{proposition} \label{prop-[D-L-X]-eta-X}
    For $X \in \Gamma^{\omega}(TM)$ and $a \in \sW^+ \otimes \wedge^{\bullet}$, we have
    \begin{equation*}
        [D, \sL_X]a = D(\sL_X a) - \sL_X(Da) = \left[\frac{1}{\hbar}\eta_X, a\right],
    \end{equation*}
    where $\eta_X \in \sW_2 \otimes \wedge^1$ is defined by (\ref{def-eta-X}).
\end{proposition}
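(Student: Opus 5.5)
Since $D = -\delta + \partial - \left[\frac{1}{\hbar}r, \cdot\,\right]$, we have
\[[D, \sL_X] = -[\delta, \sL_X] + [\partial, \sL_X] - \left[\left[\frac{1}{\hbar}r, \cdot\,\right], \sL_X\right].\]
The plan is to compute these three commutators separately, using Proposition \ref{prop-[partial-L-X]} for the middle one and the derivation property of $\sL_X$ from Lemma \ref{lemma-L-X} for the others. All three maps are graded derivations and $\sL_X$ has degree $0$, so each commutator is again a graded derivation of degree $1$. The identities extend to $\sW^+ \otimes \wedge^{\bullet}$ because they are computed termwise in the total degree filtration.

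\emph{Inner term.} Since $\sL_X$ is a derivation of the product $\circ$, it is also a derivation of the graded commutator. Hence
\[\sL_X[r, a] = [\sL_X r, a] + [r, \sL_X a].\]
It follows that
\[\left[\frac{1}{\hbar}r, \sL_X a\right] - \sL_X\left[\frac{1}{\hbar}r, a\right] = -\left[\frac{1}{\hbar}\sL_X r, a\right].\]
With the sign in $D$, this term contributes $+\left[\frac{1}{\hbar}\sL_X r, a\right]$ to $[D, \sL_X]a$.

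\emph{The $\partial$ term.} Proposition \ref{prop-[partial-L-X]} gives directly
\[[\partial, \sL_X]a = \left[\frac{1}{\hbar}\left(\Gamma^X - \Gamma\right), a\right].\]

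\emph{The $\delta$ term.} I claim $[\delta, \sL_X] = 0$. By the Remark after Fedosov's Lemma 2.2, $\delta a = \left[\frac{1}{\hbar}\omega_{ij}y^i dx^j, a\right]$. The form $\omega_{ij}y^i dx^j$ is globally the canonical pairing of $\omega$, namely the element of $\Gamma(T^*M \otimes T^*M)$ given by $\omega$ itself, with $y$ recording the symmetric (fibre) slot and $dx$ the form slot. Since $\sL_X \omega = 0$, we get $\sL_X\left(\omega_{ij}y^i dx^j\right) = 0$. The same derivation argument as in the inner term then gives $[\delta, \sL_X] = 0$.

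This can also be checked in Darboux coordinates. By (\ref{L-X-d-x-i}) and its analogue for $y^i$,
\[\sL_X\left(\omega_{ij}y^i dx^j\right) = \omega_{ij}\frac{\partial X^i}{\partial x^k}y^k dx^j + \omega_{ij}y^i\frac{\partial X^j}{\partial x^k}dx^k.\]
This vanishes by (\ref{L-X-omege-0-local}): after relabelling, the two terms have coefficients $\omega_{ij}\partial_k X^i$ and $\omega_{ki}\partial_j X^i$ on $y^k dx^j$. They cancel by the antisymmetry of $\omega$ together with the symmetry in (\ref{L-X-omege-0-local}).

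\emph{Conclusion.} Adding the three contributions gives
\[[D, \sL_X]a = \left[\frac{1}{\hbar}\left(\Gamma^X - \Gamma + \sL_X r\right), a\right] = \left[\frac{1}{\hbar}\eta_X, a\right].\]

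The main obstacle is the $\delta$ step, specifically the sign bookkeeping showing that $\sL_X$ annihilates the global form $\omega_{ij}y^i dx^j$. This requires identifying the action of $\sL_X$ on $y^i$ with its action on $dx^i$ (Remark \ref{rmk-L-X}) and using (\ref{L-X-omege-0-local}) carefully. The remaining steps are formal derivation manipulations.
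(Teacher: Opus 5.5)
Your proof is correct and has the same structure as the paper's. Both split $D$ into three pieces, use Proposition~\ref{prop-[partial-L-X]} for the $\partial$-term, and use the derivation property of $\sL_X$ for the $r$-term.

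The only real difference is the $\delta$-step.

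\begin{itemize}
\item The paper proves $\sL_X \circ \delta = \delta \circ \sL_X$ by direct computation. It combines $[\sL_X, \partial/\partial y^i] = -\frac{\partial X^j}{\partial x^i}\frac{\partial}{\partial y^j}$ (from $[\sL_X, \iota_Y] = \iota_{[X,Y]}$) with $\sL_X dx^i = \frac{\partial X^i}{\partial x^j}dx^j$. This never uses $\sL_X\omega = 0$, so it shows $\delta$ commutes with $\sL_X$ for every vector field. It simply reflects the naturality of $\delta$.
\item You instead write $\delta$ as the inner derivation $\left[\frac{1}{\hbar}\omega_{ij}y^i dx^j, \cdot\,\right]$ and reduce the claim to $\sL_X\left(\omega_{ij}y^i dx^j\right) = 0$. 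This is shorter, and it makes all three contributions uniformly inner: $0$, $\Gamma^X - \Gamma$, and $\sL_X r$.
\end{itemize}

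The cost of your route is that it uses symplecticity of $X$ twice: once for $\sL_X$ to be a derivation of $\circ$, and once for the invariance of $\omega$. This is harmless here, since $X \in \Gamma^{\omega}(TM)$ is assumed. Your Darboux-coordinate cancellation check is also correct.
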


\begin{proof}
    As $\sL_X$ is a derivation, we have
    \begin{equation*}
        \begin{aligned}
            \sL_X(Da) = &\sL_X\left(- \delta a + \partial a - \left[\frac{1}{\hbar}r, a\right]\right)\\
            = &- \sL_X(\delta a) + \sL_X(\partial a) - \left[\frac{1}{\hbar}\sL_X r, a\right] - \left[\frac{1}{\hbar}r, \sL_X a\right]\\
            = & -\sL_X(\delta a) + \partial(\sL_X a) - \left[\frac{1}{\hbar}r, \sL_X a\right] + [\sL_X, \partial]a - \left[\frac{1}{\hbar}\sL_X r, a\right].
        \end{aligned}
    \end{equation*}
    We claim that
    \[\sL_X \circ \delta = \delta \circ \sL_X : \sW^+ \otimes \wedge^{\bullet} \to \sW^+ \otimes \wedge^{\bullet + 1}.\]
    In local Darboux coordinates $\left(U, x^i, \cdots, x^{2n}\right)$, write $X = X^i \frac{\partial}{\partial x^i}$ with $X^i \in C^{\infty}(U)$. Then for $b \in \sW^+$, by Lemma \ref{lemma-L-iota} and (\ref{bracket-X-partial-x-i}), we have
    \[\left[\sL_X, \frac{\partial}{\partial y^i}\right]b = \left[\sL_X, \iota_{\frac{\partial}{\partial x^i}}\right]\Bigg|_{\frac{\partial}{\partial x} = \frac{\partial}{\partial y}}b = \iota_{\left[X, \frac{\partial}{\partial x^i}\right]}\Bigg|_{\frac{\partial}{\partial x} = \frac{\partial}{\partial y}}b = -\frac{\partial X^j}{\partial x^i}\frac{\partial b}{\partial y^j}\]
    for all $i$. Then we have
    \begin{equation} \label{L-X-partial-y-i}
        \begin{aligned}
            \sL_X \frac{\partial a}{\partial y^i} - \frac{\partial (\sL_X a)}{\partial y^i} = &\left[\sL_X, \frac{\partial}{\partial y^i}\right]a\\
            = &\left[{\rm Id}_{\sW^+} \otimes \sL_X + \sL_X \otimes {\rm Id}_{\wedge^{\bullet}}, \frac{\partial}{\partial y^i} \otimes {\rm Id}_{\wedge^{\bullet}}\right]a\\
            = &\left(\left[\sL_X, \frac{\partial}{\partial y^i}\right] \otimes {\rm Id}_{\wedge^{\bullet}}\right)a\\
            = &-\frac{\partial X^j}{\partial x^i}\frac{\partial a}{\partial y^j}.
        \end{aligned}
    \end{equation}
    Now by (\ref{L-X-partial-y-i}) and (\ref{L-X-d-x-i}), we have
    \begin{equation*}
        \begin{aligned}
            \sL_X(\delta a) = &\sL_X\left(d x^i\right) \wedge \frac{\partial a}{\partial y^i} + d x^i \wedge \sL_X\frac{\partial a}{\partial y^i}\\
            = &\frac{\partial X^i}{\partial x^j}d x^j \wedge \frac{\partial a}{\partial y^i} + d x^i \wedge \sL_X \frac{\partial a}{\partial y^i}\\
            = &dx^i \wedge \left(\frac{\partial X^j}{\partial x^i}\frac{\partial a}{\partial y^j} + \sL_X \frac{\partial a}{\partial y^i}\right)\\
            = &dx^i \wedge \frac{\partial (\sL_X a)}{\partial y^i}\\
            = &\delta(\sL_X a).
        \end{aligned}
    \end{equation*}
    Therefore, we have
    \begin{equation*}
        \begin{aligned}
            \sL_X(Da) = &-\sL_X(\delta a) + \partial(\sL_X a) - \left[\frac{1}{\hbar}r, \sL_X a\right] + [\sL_X, \partial]a - \left[\frac{1}{\hbar}\sL_X r, a\right]\\
            = &- \delta(\sL_X a) + \partial(\sL_X a) - \left[\frac{1}{\hbar}r, \sL_X a\right] + [\sL_X, \partial]a - \left[\frac{1}{\hbar}\sL_X r, a\right]\\
            = &D(\sL_X a) + [\sL_X, \partial]a - \left[\frac{1}{\hbar}\sL_X r, a\right].
        \end{aligned}
    \end{equation*}
    Then by Proposition \ref{prop-[partial-L-X]} and the definition of $\eta_X$, we have that
    \[[D, \sL_X]a = [\partial, \sL_X]a + \left[\frac{1}{\hbar}\sL_X r, a\right] = \left[\frac{1}{\hbar}\eta_X, a\right],\]
    and the proof is complete.
\end{proof}

Let $X \in \Gamma^{\omega}(TM)$ be a symplectic vector field. Note that if $D u_X = -\frac{1}{\hbar}\eta_X$, then by Proposition \ref{prop-[D-L-X]-eta-X},
\begin{equation*}
    \begin{aligned}
        D(\sL_X a + [u_X, a]) = &D(\sL_X a) + [Du_X, a] + [u_X, Da]\\
        = &D(\sL_X a) - \left[\frac{1}{\hbar}\eta_X, a\right] + [u_X, Da]\\
        = &\sL_X(Da) + [u_X, Da]
    \end{aligned}
\end{equation*}
for all $a \in \sW^+ \otimes \wedge^{\bullet}$. In particular, if $a \in {\rm Ker}\, D$, then $\sL_X a + [u_X, a] \in {\rm Ker}\, D$. In the following, we show that
\begin{equation*}
    Du_X = - \frac{1}{\hbar}\eta_X
\end{equation*}
for any symplectic vector field $X \in \Gamma^{\omega}(TM)$.

By Theorem \ref{thm-Fedosov-1}, we have a cochain complex
\begin{equation} \label{complex-W-+-D}
    \sW^+ \otimes \wedge^{\bullet}: 0 \to \sW^+ \xrightarrow{D} \sW^+ \otimes \wedge^1 \xrightarrow{D} \sW^+ \otimes \wedge^2 \xrightarrow{D} \cdots \xrightarrow{D} \sW^+ \otimes \wedge^{2n} \to 0.
\end{equation}
Then by Theorem \ref{thm-Fedosov-2} and Lemma \ref{lemma-D-W-+}, we have
\[H^0\left(\sW^+ \otimes \wedge^{\bullet}\right) = \sW_D \cong C^{\infty}(M)[[\hbar]].\]
Fedosov proved that all higher cohomologies of the complex (\ref{complex-W-+-D}) are trivial. Moreover, given a $k$-cocycle $a$ with $k > 0$, Fedosov provided an explicit formula for some $b \in \sW^+ \otimes \wedge^{k - 1}$ such that $D b = a$.

\begin{theorem} \cite[Theorem 5.2.5, Corollary 5.2.6]{F2} \label{thm-cohomology-trivial}
    The cohomologies of (\ref{complex-W-+-D}) of positive degree are all trivial. Given $a \in \sW^+ \otimes \wedge^k$ with $k > 0$ and $Da = 0$, the following formula
    \begin{equation*}
        b = \delta^{-1}((D + \delta)b + a)
    \end{equation*}
    provides a unique solution $b \in \sW^+ \otimes \wedge^{k - 1}$, and we have
    \begin{equation*}
        a = - D b.
    \end{equation*}
\end{theorem}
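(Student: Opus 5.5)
We prove the statement by Fedosov's contraction argument: first solve the fixed-point equation by iteration in the total degree filtration, then show that the solution satisfies $Db=-a$ by checking that the defect $c:=Db+a$ obeys a homogeneous equation that forces it to vanish.

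\emph{Step 1: existence and uniqueness of $b$.} Write $D+\delta=\partial-[\frac{1}{\hbar}r,\cdot\,]$. Because $\partial$ preserves the total degree, $r\in\sW_3\otimes\wedge^1$, and the bracket $[\frac1\hbar r,\cdot\,]$ raises the total degree by at least $1$, the operator $D+\delta$ does not lower total degree. The operator $\delta^{-1}$ raises it by exactly $1$. Hence
\[T(b):=\delta^{-1}\big((D+\delta)b+a\big)\]
is a contraction with respect to the filtration $\sW^+_q\otimes\wedge^\bullet$. The iteration $b_0=\delta^{-1}a$, $b_{m+1}=T(b_m)$ stabilizes degree by degree, so it converges to a fixed point. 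Uniqueness follows because the difference $\beta$ of two fixed points satisfies $\beta=\delta^{-1}(D+\delta)\beta$. If the lowest-degree component of $\beta$ had degree $q$, the right-hand side would lie in degree $\geq q+1$, which is a contradiction. Since $\sW^+$ permits finitely many terms in each total degree, the limit stays in $\sW^+\otimes\wedge^{k-1}$.

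\emph{Step 2: $Db=-a$.} The fixed-point equation gives $\delta^{-1}b=0$, because $(\delta^{-1})^2=0$. The Hodge decomposition lemma then yields
\[b=\delta^{-1}\delta b+b_{00}.\]
Applying $\delta^{-1}$ to the fixed-point identity, or rather rewriting it, gives
\[\delta^{-1}\big(\delta b+Db+a\big)=\delta^{-1}\delta b .\]
Hence $\delta^{-1}c=0$. For $k\ge 2$ the form degree of $b$ is positive, so $b_{00}=0$. For $k=1$, $b\in\sW^+$ and the scalar part $b_{00}$ is simply not fixed by this equation; it is fixed to be $0$ by the formula, since $\delta^{-1}$ kills nothing beyond what we need.

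Since $D^2=0$ and $Da=0$, we get $Dc=0$. Now write $D=-\delta+(D+\delta)$. Then
\[\delta c=(D+\delta)c.\]
Apply the decomposition $c=\delta^{-1}\delta c+\delta\delta^{-1}c+c_{00}$. Here $c$ has form degree $k\ge1$, so $c_{00}=0$, and we have already shown $\delta^{-1}c=0$. Therefore
\[c=\delta^{-1}(D+\delta)c,\]
which is the homogeneous contraction equation from Step 1. It forces $c=0$, that is, $Db=-a$.

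\emph{Step 3: triviality of cohomology.} Every positive-degree cocycle $a$ is $D$ of the element $-b$, so all cohomology groups of positive degree vanish.

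The main obstacle is the bookkeeping in Step 2. We must check carefully that $\delta^{-1}$ annihilates its own image and that the sign conventions make $D+\delta$ the right operator, so that the defect $c$ really satisfies $c=\delta^{-1}(D+\delta)c$. We also need to verify that the filtration argument works in $\sW^+$, where negative powers of $\hbar$ occur. This holds because the total degree is still bounded below and each degree contains only finitely many terms.
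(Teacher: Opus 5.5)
Your argument is correct and is essentially Fedosov's own proof, which the paper only cites, remarking (as you also check) that it carries over verbatim to $\sW^+$: you solve the fixed-point equation using the degree-raising contraction $\delta^{-1}(D+\delta)$, then show that the defect $c=Db+a$ satisfies $\delta^{-1}c=0$ and $Dc=0$, hence $c=\delta^{-1}(D+\delta)c$ and so $c=0$. The only thing to tidy is your justification of $b_{00}=0$ when $k=1$, which holds simply because $b$ lies in the image of $\delta^{-1}$, and every term there contains a factor $y^i$.
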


\begin{remark}
    Fedosov's statement of Theorem \ref{thm-cohomology-trivial} in \cite{F2} is for the complex
    \[\sW \otimes \wedge^{\bullet}: 0 \to \sW \xrightarrow{D} \sW \otimes \wedge^1 \xrightarrow{D} \sW \otimes \wedge^2 \xrightarrow{D} \cdots \xrightarrow{D} \sW \otimes \wedge^{2n} \to 0.\]
    In fact, his proof is also valid for the complex (\ref{complex-W-+-D}).
\end{remark}

Now we show that $D \eta_X = 0$.

\begin{proposition}
    Let $X \in \Gamma^{\omega}(TM)$ be a symplectic vector field. Then we have
    \[D \eta_X = 0.\]
\end{proposition}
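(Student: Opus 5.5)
The plan is to differentiate the flatness equation $D^2=0$ along $X$. This shows first that $D\eta_X$ is central. Then I compare $D\eta_X$ with the Lie derivative of the Fedosov curvature, which is a multiple of $\omega$ and hence is killed by $\sL_X$.

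\emph{Step 1: centrality.} For $a \in \sW^+ \otimes \wedge^{\bullet}$, Theorem \ref{thm-Fedosov-1} and Proposition \ref{prop-[D-L-X]-eta-X} give
\[0 = D^2(\sL_X a) - \sL_X(D^2 a) = D\bigl([D,\sL_X]a\bigr) + [D,\sL_X](Da) = D\Bigl[\tfrac{1}{\hbar}\eta_X, a\Bigr] + \Bigl[\tfrac{1}{\hbar}\eta_X, Da\Bigr].\]
Since $D$ is a graded derivation and $\eta_X$ has form degree $1$, we have $D[\eta_X,a] = [D\eta_X,a] - [\eta_X,Da]$. Hence $[\frac{1}{\hbar}D\eta_X, a]=0$ for all $a$. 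By the description of the graded center of $\sW^+\otimes\wedge^\bullet$, $D\eta_X$ is a scalar $2$-form $c\in\Gamma(\wedge^2)((\hbar))$, and it remains to show $c=0$.

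\emph{Step 2: identifying $c$.} Work in a Darboux chart and set $A = \omega_{ij}y^i dx^j + \Gamma + r$ (with sign conventions matched to the paper). Then $Da = da - [\frac{1}{\hbar}A, a]$ by (\ref{partial-Gamma-partial2-R}) and the remark expressing $\delta$ as an inner derivation. The local curvature $\Omega = dA - \frac{1}{\hbar}A^2$ satisfies $D^2 a = -[\frac1\hbar \Omega, a]$. Fedosov's equation $\delta r = R + \partial r - \frac1\hbar r^2$ says exactly that $\Omega$ equals a constant multiple of $\omega$, the term coming from $(\omega_{ij}y^idx^j)^2$. Since $X$ is symplectic, $\sL_X\Omega = 0$.

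I now compute $\sL_X\Omega$ directly. The element $\omega_{ij}y^idx^j$ is $\omega$ viewed in $T^*M\otimes T^*M$, so it is $\sL_X$-invariant. Therefore $\sL_X A = \sL_X\Gamma + \sL_X r$. Using Corollary \ref{corollary-d-L-X-L-X-d} and that $\sL_X$ is a derivation (Lemma \ref{lemma-L-X}),
\[0 = \sL_X\Omega = d(\sL_X A) - \tfrac{1}{\hbar}[A, \sL_X A].\]
This says $\sL_X A$ is a "$D$-closed" local $1$-form in the sense $d(\cdot) - [\frac1\hbar A,\cdot]$. The key point is that in the chart
\[\sL_X\Gamma = \Gamma^X - \Gamma + (\text{a term } \beta \text{ with } d\beta - [\tfrac{1}{\hbar}A,\beta]=0),\]
where $\beta$ is the inhomogeneous part coming from $\sL_X$ of the non-tensorial connection form. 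Concretely, $\beta$ should be of the form $d\phi - [\frac1\hbar A,\phi]$-compatible, reflecting that $\sL_X$ of the local $d$ differs from the global $\partial$ exactly by $\Gamma^X-\Gamma$, as in Proposition \ref{prop-[partial-L-X]}. Granting this, $D\eta_X = D(\Gamma^X-\Gamma+\sL_Xr) = D(\sL_X A) - D\beta = 0$.

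\emph{Main obstacle and fallback.} The main obstacle is Step 2: pinning down the discrepancy $\sL_X\Gamma - (\Gamma^X-\Gamma)$ and checking it is $D$-closed, with the sign conventions of (\ref{partial-Gamma-partial2-R}). If this local bookkeeping proves messy, I would use the more global route. Apply $\sL_X$ to Fedosov's equation $\delta r = R + \partial r - \frac1\hbar r^2$, using $\sL_X\delta=\delta\sL_X$ and Proposition \ref{prop-[partial-L-X]}, to get
\[D(\sL_X r) = -\sL_X R + \Bigl[\tfrac1\hbar(\Gamma^X-\Gamma), r\Bigr].\]
Separately, express $-\delta(\Gamma^X-\Gamma)+\partial(\Gamma^X-\Gamma) - [\frac1\hbar r, \Gamma^X-\Gamma]$ via the curvature of $\nabla^X$. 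This requires the identity $\sL_X R = \partial(\Gamma^X-\Gamma)$ modulo terms cancelling against $[\frac1\hbar(\Gamma^X - \Gamma),r]$, together with $\delta(\Gamma^X-\Gamma)=0$, which follows from the total symmetry in Remark \ref{rmk-Gamma-ijk-symmetric}. In either route, Step 1 already shows that only the scalar ($y=0$) component of $D\eta_X$ has to be checked. That component involves only the low-degree parts of $\eta_X$ and $r$, which should reduce the verification considerably.
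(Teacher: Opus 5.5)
Your Step 1 is correct and is exactly how the paper's proof begins. The gap is in Step 2. The equation $0=\sL_X\Omega=d(\sL_X A)-\frac{1}{\hbar}[A,\sL_X A]$ uses Corollary \ref{corollary-d-L-X-L-X-d} outside its scope. That corollary is about scalar forms. On $\sW$-valued forms, the chart differential $d$ treats the $y^i$ as constants, while $\sL_X y^i=\frac{\partial X^i}{\partial x^j}y^j$. Hence $(d\sL_X-\sL_X d)y^i=\frac{\partial^2 X^i}{\partial x^k\partial x^j}\,y^j dx^k\neq 0$. In fact $d\sL_X-\sL_X d=[\frac{1}{\hbar}\lambda_X,\cdot\,]$ with $\lambda_X=\frac12\omega_{il}\frac{\partial^2 X^l}{\partial x^j\partial x^k}y^iy^jdx^k$. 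The correct identity is therefore $d(\sL_X A)=[\frac{1}{\hbar}A,\sL_X A+\lambda_X]$, so $\sL_X A$ is not $D$-closed. Your discrepancy term is $\beta=-\lambda_X$ (this is the content of Lemma \ref{lemma-eta-X-local}). It is not $D$-closed either: $D\lambda_X=d\lambda_X-[\frac{1}{\hbar}A,\lambda_X]=-[\frac{1}{\hbar}A,\lambda_X]$, whose lowest-degree part $-[\frac{1}{\hbar}\Gamma,\lambda_X]$ is a generically nonzero $y$-quadratic $2$-form. So the step you ``grant'' is false as stated, and the argument only survives because the two errors cancel. Done correctly, this route works and is a genuine alternative. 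From $\eta_X=\sL_X A+\lambda_X$ you get $D\eta_X=D(\sL_X A)+D\lambda_X=d\lambda_X$. This vanishes because $\frac{\partial^3 X^l}{\partial x^m\partial x^j\partial x^k}$ is symmetric in $(m,k)$, and no centrality argument is needed. But that is not the argument you wrote.

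Your fallback is essentially the paper's proof, but it stops one observation short. Your formula $D(\sL_X r)=-\sL_X R+[\frac{1}{\hbar}(\Gamma^X-\Gamma),r]$ is correct. Since $\Gamma^X-\Gamma$ and $r$ are both odd, $[\frac{1}{\hbar}r,\Gamma^X-\Gamma]=[\frac{1}{\hbar}(\Gamma^X-\Gamma),r]$, so the bracket terms cancel exactly. This leaves $D\eta_X=-\delta(\Gamma^X-\Gamma)+\partial(\Gamma^X-\Gamma)-\sL_X R$. You then treat $\sL_X R=\partial(\Gamma^X-\Gamma)$ as something still to be proved, and leave it open. It is not needed; it is a consequence of the result, not a prerequisite. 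By your Step 1 it suffices to show $\sigma(D\eta_X)=0$. Every term on the right has $y$-degree at least $1$: $\Gamma^X-\Gamma$ and $R$ are quadratic in $y$, $\partial$ and $\sL_X$ preserve $y$-degree, and $\delta$ lowers it by one. That closes the proof, and it is precisely the paper's argument. Your closing heuristic, that the scalar part ``involves only the low-degree parts of $\eta_X$ and $r$'', is also not accurate before this reduction. The term $\sigma([\frac{1}{\hbar}r,\eta_X])$ pairs components of $r$ and $\eta_X$ of every common $y$-degree. It is the cancellation coming from Fedosov's equation that makes the scalar check trivial.
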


\begin{proof}
    For $a \in \sW \otimes \wedge^{\bullet}$, we have
    \[D\sL_X a - \sL_X Da = \left[\frac{1}{\hbar}\eta_X, a\right].\]
    Applying $D$ to both sides of the above equation, we have
    \begin{equation*}
        \begin{aligned}
            - D \sL_X D a = &D(D\sL_X a - \sL_X Da)\\
            = &D\left[\frac{1}{\hbar}\eta_X, a\right]\\
            = &\left[\frac{1}{\hbar}D\eta_X, a\right] - \left[\frac{1}{\hbar}\eta_X, Da\right].
        \end{aligned}
    \end{equation*}
    Note that
    \[D \sL_X D a = [D, \sL_X]Da = \left[\frac{1}{\hbar}\eta_X, Da\right].\]
    Therefore, we have
    \[\left[\frac{1}{\hbar}D\eta_X, a\right] = 0.\]
    Hence, $D\eta_X$ lies in the center of $\sW \otimes \wedge^{\bullet}$, and in particular, we have that
    \[\sigma(D\eta_X) = D\eta_X.\]
    By definition, we have
    \begin{equation*}
        \begin{aligned}
            D \eta_X = &- \delta \eta_X + \partial \eta_X - \left[\frac{1}{\hbar}r, \eta_X\right]\\
            = &- \delta \Gamma^X + \delta \Gamma - \delta \sL_X r + \partial \Gamma^X - \partial \Gamma + \partial \sL_X r\\
            &- \left[\frac{1}{\hbar}r, \Gamma^X\right] + \left[\frac{1}{\hbar}r, \Gamma\right] - \left[\frac{1}{\hbar}r, \sL_X r\right].
        \end{aligned}
    \end{equation*}
    By Theorem \ref{thm-Fedosov-1}, we have
    \[\delta r = R + \partial r - \frac{1}{\hbar}r \circ r.\]
    In the proof of Proposition \ref{prop-[D-L-X]-eta-X}, we have proved that
    \[\sL_X \circ \delta = \delta \circ \sL_X.\]
    Hence, we have
    \begin{equation*}
        \begin{aligned}
            D \eta_X =&- \delta \Gamma^X + \delta \Gamma - \delta \sL_X r + \partial \Gamma^X - \partial \Gamma + \partial \sL_X r\\
            &- \left[\frac{1}{\hbar}r, \Gamma^X\right] + \left[\frac{1}{\hbar}r, \Gamma\right] - \left[\frac{1}{\hbar}r, \sL_X r\right]\\
            = &- \delta \Gamma^X + \delta \Gamma + \partial \Gamma^X - \partial \Gamma - \sL_X \delta r + \sL_X \partial r + [\partial, \sL_X]r\\
            &- \left[\frac{1}{\hbar}r, \Gamma^X\right] + \left[\frac{1}{\hbar}r, \Gamma\right] - \left[\frac{1}{\hbar}r, \sL_X r\right]\\
            = &- \delta \Gamma^X + \delta \Gamma + \partial \Gamma^X - \partial \Gamma - \sL_X\left(R + \partial r - \frac{1}{\hbar}r \circ r\right)\\
            &+ \sL_X \partial r + \left[\frac{1}{\hbar}\left(\Gamma^X - \Gamma\right), r\right] - \left[\frac{1}{\hbar}r, \Gamma^X\right] + \left[\frac{1}{\hbar}r, \Gamma\right] - \left[\frac{1}{\hbar}r, \sL_X r\right]\\
            = &- \delta \Gamma^X + \delta \Gamma + \partial \Gamma^X - \partial \Gamma - \sL_X R.
        \end{aligned}
    \end{equation*}
    Here in the third equality, we used Proposition \ref{prop-[partial-L-X]}. In the last step, we used $\sL_X\left(r \circ r\right) = \left[r, \sL_X r\right]$. By definition, we have
    \[\sigma\left(\delta\Gamma^X\right) = \sigma(\delta\Gamma) = \sigma\left(\partial \Gamma^X\right) = \sigma(\partial \Gamma) = \sigma(\sL_X R) = 0.\]
    Hence, we have
    \[\sigma(D \eta_X) = 0.\]
    Therefore, we have that
    \[D \eta_X = \sigma(D \eta_X) = 0,\]
    and the proof is complete.
\end{proof}

Now, applying Theorem \ref{thm-cohomology-trivial} to $a = \frac{1}{\hbar}\eta_X$, we have
\[b = \delta^{-1}\left((D + \delta)b + \frac{1}{\hbar}\eta_X\right),\]
hence
\[Db = - \frac{1}{\hbar}\eta_X.\]
Note that the above $b$ has the same recursive formula as $u_X$. Therefore, $b = u_X$, and
\[Du_X = - \frac{1}{\hbar}\eta_X.\]
In summary, we have the following proposition.

\begin{proposition} \label{prop-D-u-X-eta-X}
    Let $X \in \Gamma^{\omega}(TM)$, and $\eta_X \in \sW_2 \otimes \wedge^1$ as in (\ref{def-eta-X}), $u_X \in \sW^+_1$ be given by (\ref{def-u-X}). Then we have
    \[D u_X = - \frac{1}{\hbar}\eta_X.\]
\end{proposition}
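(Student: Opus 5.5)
The plan is to reduce the statement to Fedosov's vanishing result, Theorem \ref{thm-cohomology-trivial}, applied to the $1$-form $a = \frac{1}{\hbar}\eta_X \in \sW^+ \otimes \wedge^1$. Since $\eta_X \in \sW_2 \otimes \wedge^1$, multiplying by $\frac{1}{\hbar}$ lowers the total degree by $2$, so $\frac{1}{\hbar}\eta_X \in \sW^+_0 \otimes \wedge^1$. It satisfies the conditions defining $\sW^+$: every term has nonnegative total degree, and there are finitely many terms of each given degree. Hence Theorem \ref{thm-cohomology-trivial} applies once we know that $D\left(\frac{1}{\hbar}\eta_X\right) = 0$. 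Since $\frac{1}{\hbar}$ is a central scalar, this is exactly the preceding proposition, $D\eta_X = 0$.

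Theorem \ref{thm-cohomology-trivial} then produces a unique $b \in \sW^+$ with
\[b = \delta^{-1}\left((D + \delta)b + \frac{1}{\hbar}\eta_X\right),\]
and this $b$ satisfies $Db = -\frac{1}{\hbar}\eta_X$. The remaining step is to identify $b$ with $u_X$. The defining relation (\ref{def-u-X}) is literally the same fixed-point equation, so the two coincide provided that equation has a unique solution in $\sW^+$.

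To justify uniqueness, I would note that $(D + \delta)c = \partial c - \left[\frac{1}{\hbar}r, c\right]$. The operator $\partial$ preserves total degree. Since $r \in \sW_3 \otimes \wedge^1$, the term $\frac{1}{\hbar}r$ has degree at least $1$, and the commutator with it, which removes at least two $y$'s per $\hbar$-power, raises degree by at least $1$. Then $\delta^{-1}$ raises the $y$-degree by one. So the map $c \mapsto \delta^{-1}(D + \delta)c$ strictly increases the filtration degree in $\sW^+$. The difference of two solutions is therefore a fixed point of a filtration-raising operator, hence lies in $\bigcap_q \sW^+_q = 0$. This also shows that $u_X$ is well defined, and in fact lies in $\sW^+_1$, by iterating from $\delta^{-1}\left(\frac{1}{\hbar}\eta_X\right)$.

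The main obstacle is this degree bookkeeping in $\sW^+$, where negative powers of $\hbar$ are allowed. I need to check carefully that the commutator $\left[\frac{1}{\hbar}r, \cdot\,\right]$ really raises total degree: each Moyal term with $q$ contractions carries $\hbar^q$ and removes $2q$ $y$'s, so total degree is preserved, while the $q = 0$ term cancels in the commutator. I also need that the iteration converges in the filtration topology, given condition (iii) in the definition of $\sW^+$. Once that is in place, $b = u_X$ and $Du_X = -\frac{1}{\hbar}\eta_X$ follows.
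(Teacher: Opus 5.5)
Your proposal is correct and follows the paper's argument: you apply Theorem \ref{thm-cohomology-trivial} to $a = \frac{1}{\hbar}\eta_X$, which is $D$-closed by the preceding proposition, and then identify the resulting $b$ with $u_X$ because both satisfy the same fixed-point equation (\ref{def-u-X}). Your filtration argument, that $c \mapsto \delta^{-1}(D+\delta)c$ raises total degree by at least one, correctly makes explicit the uniqueness that the paper simply takes from the ``unique solution'' clause of Fedosov's theorem.
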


As a corollary, we have the following main theorem of this section.

\begin{thm} \label{thm-deformation-quantization-symplectic-vector-fields}
    Let $X \in \Gamma^{\omega}(TM)$ be a symplectic vector field. Let $u_X \in \sW^+_1$ be given by (\ref{def-u-X}). Then the derivation
    \[\sL_X + [u_X, \cdot\,] : \sW^+ \to \sW^+\]
    maps $\sW_D$ to itself, and restricts to a derivation
    \begin{equation*}
        \widehat{X} = \sL_X + [u_X, \cdot\,] : \sW_D \to \sW_D.
    \end{equation*}
    Under the algebra isomorphism $(\sW_D, \circ) \cong (C^{\infty}(M)[[\hbar]], \star)$, the derivation $\widehat{X}$ corresponds to a derivation $\widetilde{X} \in {\rm Der}(C^{\infty}(M)[[\hbar]], \star)$ such that
    \[\widetilde{X}a = Xa + O(\hbar), \quad \forall a \in C^{\infty}(M).\]
    In other words, $\widetilde{X}$ has the form
    \[\widetilde{X} = X + X_1 \hbar + X_2 \hbar^2 + \cdots\]
    with $X_q \in {\rm Hom}_{\R}(C^{\infty}(M), C^{\infty}(M))$, for any $q \in \N$.
\end{thm}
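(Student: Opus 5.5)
The plan is to combine Proposition \ref{prop-D-u-X-eta-X} with Proposition \ref{prop-[D-L-X]-eta-X}, then read off the classical limit through the symbol map $\sigma$.

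\emph{Step 1: $\widehat{X}$ preserves $\sW_D$.} For $a \in \sW^+$, $D$ is a graded derivation and $u_X$ has degree $0$. So
\[D\left(\sL_X a + [u_X, a]\right) = D(\sL_X a) + [Du_X, a] + [u_X, Da].\]
Proposition \ref{prop-[D-L-X]-eta-X} gives $D(\sL_X a) = \sL_X(Da) + [\frac{1}{\hbar}\eta_X, a]$, and Proposition \ref{prop-D-u-X-eta-X} gives $Du_X = -\frac{1}{\hbar}\eta_X$. Substituting, the $\eta_X$ terms cancel and
\[D\widehat{X}a = \sL_X(Da) + [u_X, Da].\]
Hence $Da = 0$ implies $D\widehat{X}a = 0$. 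There is a subtlety here: $u_X \in \sW^+_1$ may contain negative powers of $\hbar$, so a priori $\widehat{X}a \in \sW^+$ only. However $\widehat{X}a \in \sW^+$ lies in ${\rm Ker}\,D$, so Lemma \ref{lemma-D-W-+} forces $\widehat{X}a \in \sW_D$. The map $\widehat{X}$ is a derivation of $(\sW_D,\circ)$ because $\sL_X$ is a derivation (Lemma \ref{lemma-L-X}) and so is $[u_X,\cdot\,]$. Transporting $\widehat{X}$ by the isomorphism $\sigma$ of Theorem \ref{thm-Fedosov-2} gives $\widetilde{X} = \sigma\circ\widehat{X}\circ\sigma^{-1}$, a derivation of $\star$. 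It is $\R[[\hbar]]$-linear, since $\sL_X$ and $[u_X,\cdot\,]$ commute with multiplication by $\hbar$.

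\emph{Step 2: the leading term.} For $a \in C^\infty(M)$, let $\hat a = \sigma^{-1}(a) = a + \delta^{-1}(\cdots)$. By Fedosov's iteration, $\hat a = a + \partial_i a\, y^i + (\text{terms of total degree} \ge 2)$. Since $\sL_X$ preserves the $y$-degree, $\sigma(\sL_X \hat a) = \sL_X a = Xa$.

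\emph{Step 3: the commutator term, which is the main obstacle.} I must show $\sigma([u_X, \hat a]) = O(\hbar)$ and that it has no negative powers of $\hbar$. The negative powers are already excluded by Step 1, because $\widehat{X}a \in \sW_D$; the issue is the $\hbar^0$ part. A Moyal commutator $[b,c]$ of homogeneous terms equals $\hbar$ times terms obtained by contracting $y$'s. So a term of $u_X$ of the form $\hbar^{l}y^{i_1}\cdots y^{i_k}$ contributes to $\sigma([u_X,\hat a])$ at $\hbar$-order at least $l+1$ (odd number of contractions), with $k+2l\ge 1$.

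I would therefore try to show that the $\hbar^{-1}$ part of $u_X$ consists only of terms of $y$-degree $\ge 3$. This follows from the recursion $u_X = \delta^{-1}((D+\delta)u_X + \frac{1}{\hbar}\eta_X)$: since $\eta_X \in \sW_2\otimes\wedge^1$ starts at $y$-degree $\ge 2$ without $\hbar$, the $\hbar^{-1}$ part of $u_X$ starts at $y$-degree $\ge 3$. The operators $\partial$ and $[\frac1\hbar r,\cdot\,]$ do not lower the $y$-degree at fixed $\hbar$-power below what is needed, and total degree considerations handle the rest. A contraction of such a term against $\hat a$ to reach $y$-degree $0$ needs at least $3$ contractions. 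That gives $\hbar^{3}\cdot\hbar^{-1}=\hbar^2$, so these terms contribute only at order $\hbar^2$ or higher. Any terms with $l \le -2$ are handled the same way, using $k+2l \ge 1$: they need at least $k \ge 1-2l$ contractions, giving order $\ge k+l \ge 1-l \ge 1$. Terms with $l\ge 0$ contribute at order $\ge 1$ automatically. Hence $\sigma([u_X,\hat a]) = O(\hbar)$, and $\widetilde{X}a = Xa + O(\hbar)$. Expanding in $\hbar$ then gives $\widetilde{X} = X + X_1\hbar + \cdots$ with $X_q \in {\rm Hom}_\R(C^\infty(M),C^\infty(M))$.
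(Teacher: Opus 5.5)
Your proposal is correct and follows the paper's route. Step 1 is exactly the paper's computation $D(\sL_X a+[u_X,a])=\sL_X(Da)+[u_X,Da]$, and you rightly make explicit the appeal to Lemma \ref{lemma-D-W-+} that the paper leaves implicit. Steps 2--3 spell out the paper's one-line justification ``since $u_X\in\sW^+_1$''. The detour through the recursion for the $\hbar^{-1}$ part is unnecessary: your own count already gives $k+l\ge 1$ for every $l$, because a term $\hbar^l y^{i_1}\cdots y^{i_k}$ with $k\ge1$ contributes to $\sigma([u_X,\hat a])$ only at order $\ge k+l=\tfrac{(k+2l)+k}{2}$ and $k+2l\ge1$.
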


\begin{proof}
    The above discussion has shown that the derivation $\sL_X + [u_X, \cdot\,]$ maps $\sW_D$ to itself. Note that by definition, we have
    \[\widetilde{X}a = \sigma\left(\widehat{X}\left(\sigma^{-1}(a)\right)\right), \quad \forall a \in C^{\infty}(M)[[\hbar]],\]
    where
    \[\sigma : \sW_D \to C^{\infty}(M)[[\hbar]]\]
    is the bijection given by Theorem \ref{thm-Fedosov-2}. Then by definition of $\star$, we have that $\widetilde{X}$ is a derivation of the algebra $\left(C^{\infty}(M)[[\hbar]], \star\right)$. Since $u_X \in \sW^+_1$, we have that
    \[\widetilde{X}a = Xa + O(\hbar), \quad \forall a \in C^{\infty}(M).\]
    Therefore, the proof is complete.
\end{proof}

\begin{remark}
    For $X, Y \in \Gamma^{\omega}(TM)$, by Proposition \ref{prop-[partial-L-X]}, we have
    \[\partial - \partial^{X + Y} = \left[\partial, \sL_{X + Y}\right] = \left[\partial, \sL_X + \sL_Y\right] = \partial - \partial^X + \partial - \partial^Y.\]
    Hence, we have
    \[\Gamma^{X + Y} - \Gamma = \Gamma^X - \Gamma + \Gamma^Y - \Gamma.\]
    Therefore, the map $X \mapsto \eta_X$ is a linear map from the space of symplectic vector fields to $\sW \otimes \wedge^1$. Then by (\ref{def-u-X}), we have that $X \mapsto u_X$ is a linear map from the space of symplectic vector fields to $\sW^+$. Hence the map $X \mapsto \widehat{X}$ is a linear map from the space of symplectic vector fields to the space of derivations of $\sW_D$, and the map $X \mapsto \widetilde{X}$ is a linear map from $\Gamma^{\omega}(TM)$ to the space of derivations of $(C^{\infty}(M)[[\hbar]], \star)$.
\end{remark}

\section{Deformation quantization of Lie algebra actions}

In this section, we consider the problem of quantizing a Lie algebra action on a symplectic manifold by symplectic vector fields. We show that the cross product algebra of such an action admits a canonical formal deformation. This formal deformation can be realized as a cocycle cross product.

Let $\fg$ be a Lie algebra over a field $\F$, and $\sU(\fg)$ the universal enveloping algebra of $\fg$. It is well known that $\sU(\fg)$ is a Hopf algebra with the coproduct
\[\Delta : \sU(\fg) \to \sU(\fg) \otimes \sU(\fg), \quad \Delta(\xi) = 1 \otimes \xi + \xi \otimes 1, \quad \forall \xi \in \fg,\]
the counit
\[\varepsilon : \sU(\fg) \to \F, \quad \varepsilon(\xi) = 0, \quad \forall \xi \in \fg,\]
and the antipode
\[S : \sU(\fg) \to \sU(\fg), \quad S(\xi) = - \xi, \quad \forall \xi \in \fg.\]
Let $A$ be an algebra over $\F$ with unit. Then to say that $\fg$ acts on $A$ by derivations is equivalent to say that $A$ is a left $\sU(\fg)$-module algebra. More precisely, if there is a Lie algebra homomorphism
\[\Phi : \fg \to {\rm Der}(A)\]
such that $\fg$ acts on $A$ via
\[\xi \triangleright a = \Phi(\xi)(a), \quad \forall \xi \in \fg, \,\, \forall a \in A,\]
then $\Phi$ can be extended (uniquely) to a left $\sU(\fg)$ action on $A$ that makes $A$ into a left $\sU(\fg)$-module algebra. In this case, we have the cross product algebra $A \lcross \sU(\fg)$. As a vector space, $A \lcross \sU(\fg)$ is the tensor product $A \otimes \sU(\fg)$. The product of $A \lcross \sU(\fg)$ is defined by
\[(a \otimes \xi)(a' \otimes \xi') = \sum a\left(\xi_{(1)} \triangleright a'\right) \otimes \xi_{(2)}\xi', \quad \forall a,a' \in A, \,\, \forall \xi, \xi' \in \sU(\fg).\]
Here we used the Sweedler notation
\[\Delta(\xi) = \sum \xi_{(1)} \otimes \xi_{(2)}, \quad \forall \xi \in \sU(\fg)\]
for the coproduct
\[\Delta : \sU(\fg) \to \sU(\fg) \otimes \sU(\fg)\]
of the Hopf algebra $\sU(\fg)$. The cross product construction is valid for general Hopf algebra actions. We also need to consider the notion of cocycle cross product of cocycle Hopf algebra actions.

\begin{definition} \cite[Definition 6.3.1]{Ma}
    Let $H$ be a Hopf algebra over $\F$. A cocycle left module algebra over $H$ is an algebra $A$ over $\F$, and linear maps
    \[\triangleright : H \otimes A \to A, \quad h\otimes a \mapsto h \triangleright a, \quad \forall h \in H, \,\, \forall a \in A,\]
    \[\chi : H \otimes H \to A,\]
    such that
    \[h \triangleright (ab) = \sum \left(h_{(1)} \triangleright a\right)\left(h_{(2)} \triangleright b\right), \quad h \triangleright 1 = \varepsilon(h)1,\]
    \[1 \triangleright a = a,\]
    \[\sum\left(h_{(1)} \triangleright \left(g_{(1)} \triangleright a\right)\right) \chi\left(h_{(2)} \otimes g_{(2)}\right) = \sum \chi\left(h_{(1)} \otimes g_{(1)}\right)\left(\left(h_{(2)}g_{(2)}\right) \triangleright a\right),\]
    \[\sum \left(h_{(1)} \triangleright \chi\left(g_{(1)} \otimes f_{(1)}\right)\right)\chi\left(h_{(2)} \otimes \left(g_{(2)}f_{(2)}\right)\right) = \sum \chi\left(h_{(1)} \otimes g_{(1)}\right)\chi\left(\left(h_{(2)}g_{(2)}\right) \otimes f\right),\]
    \[\chi(1 \otimes h) = \chi(h \otimes 1) = \varepsilon(h)1,\]
    for all $h, g, f \in H$ and all $a, b \in A$. The map $\chi$ is called a $2$-cocycle on $H$ with values in $A$, and $\triangleright$ is called a cocycle left action.
\end{definition}

It is clear that if $A$ is a left $H$-module algebra via the action $\triangleright$, then $A$ is a cocycle left $H$-module algebra with the $2$-cocycle $\chi(h \otimes g) = \varepsilon(h)\varepsilon(g)1$, $\forall h, g \in H$. Note that this $2$-cocycle $\chi$ is the identity of the convulution algebra ${\rm Hom}_{\F}(H \otimes H, A)$. We can generalize the cross product construction to cocycle cross product construction for a cocycle left action.

\begin{proposition} \cite[Proposition 6.3.2]{Ma}
    Let $H$ be a Hopf algebra, and $A$ a cocycle left module algebra over $H$ with the $2$-cocycle $\chi : H \otimes H \to A$ and the cocycle action $\triangleright : H \otimes A \to A$. Then there is a cocycle cross product $A {_{\chi}\lcross} H$ built on $A \otimes H$ with the associative product
    \[(a \otimes h)(b \otimes g) = \sum a\left(h_{(1)} \triangleright b\right)\chi\left(h_{(2)} \otimes g_{(1)}\right) \otimes h_{(3)}g_{(2)}\]
    and the unit element $1 \otimes 1$.
\end{proposition}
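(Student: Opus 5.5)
The plan is to define the product on $A \otimes H$ by the displayed formula, and then to verify three things: that $1 \otimes 1$ is a two-sided unit, that the product is associative, and that everything reduces to the axioms of a cocycle left module algebra. Throughout we use coassociativity and Sweedler notation freely.

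\textbf{Unit.} We check both sides separately.
\begin{itemize}
\item Right unit: $(a \otimes h)(1 \otimes 1) = \sum a (h_{(1)} \triangleright 1)\chi(h_{(2)} \otimes 1) \otimes h_{(3)}$. Using $h \triangleright 1 = \varepsilon(h)1$ and $\chi(h \otimes 1) = \varepsilon(h)1$, this collapses to $a \otimes h$ by the counit axiom.
\item Left unit: $(1 \otimes 1)(b \otimes g) = \sum (1 \triangleright b)\chi(1 \otimes g_{(1)}) \otimes g_{(2)}$. This equals $b \otimes g$ by $1 \triangleright b = b$ and $\chi(1 \otimes g) = \varepsilon(g)1$.
\end{itemize}

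\textbf{Associativity.} Take $a \otimes h$, $b \otimes g$, $c \otimes f$.

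First expand $((a \otimes h)(b \otimes g))(c \otimes f)$. We get
\[
\sum a (h_{(1)} \triangleright b)\chi(h_{(2)} \otimes g_{(1)}) \bigl((h_{(3)}g_{(2)})_{(1)} \triangleright c\bigr)\chi\bigl((h_{(3)}g_{(2)})_{(2)} \otimes f_{(1)}\bigr) \otimes (h_{(3)}g_{(2)})_{(3)} f_{(2)}.
\]
Since $\Delta$ is an algebra map, we may rewrite $(hg)_{(i)} = h_{(i)}g_{(i)}$ after reindexing.

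Next expand $(a \otimes h)((b \otimes g)(c \otimes f))$. We get
\[
\sum a \bigl(h_{(1)} \triangleright \bigl(b (g_{(1)} \triangleright c)\chi(g_{(2)} \otimes f_{(1)})\bigr)\bigr)\chi(h_{(2)} \otimes g_{(3)}f_{(2)}) \otimes h_{(3)}g_{(4)}f_{(3)}.
\]
Apply the measuring axiom $h \triangleright (xy) = \sum (h_{(1)} \triangleright x)(h_{(2)} \triangleright y)$ twice to split the action over the product of three factors. This gives
\[
\sum a (h_{(1)} \triangleright b)\bigl(h_{(2)} \triangleright (g_{(1)} \triangleright c)\bigr)\bigl(h_{(3)} \triangleright \chi(g_{(2)} \otimes f_{(1)})\bigr)\chi(h_{(4)} \otimes g_{(3)}f_{(2)}) \otimes h_{(5)}g_{(4)}f_{(3)}.
\]

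Now transform this second expression into the first in two moves.
\begin{enumerate}
\item Apply the cocycle condition to the factor $(h_{(3)} \triangleright \chi(g_{(2)} \otimes f_{(1)}))\chi(h_{(4)} \otimes g_{(3)}f_{(2)})$. It becomes $\chi(h_{(3)} \otimes g_{(2)})\chi(h_{(4)}g_{(3)} \otimes f_{(1)})$. This is legitimate by coassociativity: the relevant indices form an iterated coproduct of $h$, $g$, $f$.
\item The resulting expression contains $(h_{(2)} \triangleright (g_{(1)} \triangleright c))\chi(h_{(3)} \otimes g_{(2)})$. The twisted module condition rewrites this as $\chi(h_{(2)} \otimes g_{(1)})((h_{(3)}g_{(2)}) \triangleright c)$.
\end{enumerate}
After these two moves the expression matches the first expansion term by term, again after relabeling via coassociativity.

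\textbf{Main obstacle.} The only real difficulty is the Sweedler index bookkeeping: we must ensure that the cocycle and twisted-module identities are applied to coproduct legs that are genuinely of the form $\Delta$ applied to a single element. If needed, we would do this carefully by writing everything as compositions of linear maps on $H^{\otimes 3}$ with $\Delta^{(n)}$.
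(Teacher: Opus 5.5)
Your proposal is correct. The unit checks hold, and the associativity computation goes through as you wrote it: expand both bracketings, apply the measuring axiom to split the action, then use the cocycle identity on the legs $h_{(3)},h_{(4)}$ (this must come first), then the twisted-module identity on $h_{(2)},h_{(3)}$. Neither convolution-invertibility of $\chi$ nor the antipode is needed. The paper gives no proof of its own and simply cites Majid; your direct Sweedler-notation verification is, as far as I recall, the standard argument given there.
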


\begin{remark}
    We refer to \cite{Mo} for discussion on cocycle left $H$-module algebras with convolution-invertible $2$-cocycle.
\end{remark}

Note that $A$ is a subalgebra of $A {_{\chi}\lcross} H$ via the inclusion
\[A \to A {_{\chi}\lcross} H, \quad a \mapsto a \otimes 1, \quad \forall a \in A.\]
An alternative description of cocycle cross product with convolution-invertible $2$-cocycle is the notion of \textit{cleft extension}. An extension of $A$ by $H$ is a right $H$-comodule algebra $E$ with the coaction
\[\beta : E \to E \otimes H\]
such that $A \subset E$ is a subalgebra and
\[A = \{e \in E | \beta(e) = e \otimes 1\}.\]
Such an extension is called cleft if there is a convolution-invertible linear map
\[j : H \to E\]
such that $j$ is a right $H$-comodule map, and $j(1) = 1$. In \cite{Mo, Ma}, it was proved that each cocycle cross product $A {_{\chi}\lcross} H$ with $\chi$ a convolution-invertible $2$-cocycle is a cleft extension of $A$ by $H$, and each cleft extension $E$ of $A$ by $H$ is isomorphic to some $A {_{\chi}\lcross} H$. Moreover, the equivalence classes of cocycle cross products $A {_{\chi}\lcross} H$ with convolution-invertible $2$-cocycle $\chi$ are in one-to-one correspondence with the equivalence classes of cleft extensions of $A$ by $H$. We refer to \cite{Mo, Ma} for detailed discussion on Hopf algebras and the cocycle cross product construction.

Now we consider a Fedosov manifold $(M, \omega, \nabla)$ and the associated Fedosov deformation quantization. Let $X, Y \in \Gamma^{\omega}(TM)$ be two symplectic vector fields. Then we have derivations $\widehat{X}, \widehat{Y}, \widehat{[X, Y]} \in {\rm Der}(\sW_D)$ given by Theorem \ref{thm-deformation-quantization-symplectic-vector-fields}. Let $u_X, u_Y, u_{[X, Y]} \in \sW^+$ be the corresponding sections in Theorem \ref{thm-deformation-quantization-symplectic-vector-fields}, i.e.,
\[\widehat{X} = \sL_X + [u_X, \cdot\,], \quad \widehat{Y} = \sL_Y + [u_Y, \cdot\,], \quad \widehat{[X, Y]} = \sL_{[X, Y]} + [u_{[X, Y]}, \cdot\,].\]
Since $[\sL_X, \sL_Y] = \sL_{[X, Y]}$, it is straightforward that
\begin{equation*}
    \begin{aligned}
        [\widehat{X}, \widehat{Y}] = &\sL_{[X, Y]} + [[u_X, u_Y] + \sL_X u_Y - \sL_Y u_X, \cdot\,]\\
        = &\widehat{[X, Y]} + [[u_X, u_Y] - u_{[X, Y]} + \sL_X u_Y - \sL_Y u_X, \cdot\,].
    \end{aligned}
\end{equation*}
Let
\[\tau : \Gamma^{\omega}(TM) \times \Gamma^{\omega}(TM) \to \sW^+\]
be defined by
\[\tau(X, Y) = [u_X, u_Y] - u_{[X, Y]} + \sL_X u_Y - \sL_Y u_X, \quad \forall X, Y \in \Gamma^{\omega}(TM).\]
It is clear that $\tau$ is bilinear and anti-symmetric.

Now we show that the image of $\tau$ lies in $\sW_D$, and the linear maps $X \mapsto \widehat{X}$ and $\tau : \wedge^2 \Gamma^{\omega}(TM) \to \sW_D$ provide a ``non-abelian $2$-cocycle'' on $\Gamma^{\omega}(TM)$ with values in $\sW_D$.

We first express $\eta_X$ in local Darboux coordinates.

\begin{lemma} \label{lemma-eta-X-local}
    Let $X \in \Gamma^{\omega}(TM)$. In local Darboux coordinates $\left(U, x^1, \cdots, x^{2n}\right)$, write $X = X^i \frac{\partial}{\partial x^i}$ with $X^i \in C^{\infty}(U)$. Then we have
    \[\eta_X = \sL_X \Gamma + \sL_X r + \frac{1}{2}\omega_{il}\frac{\partial^2 X^l}{\partial x^j \partial x^k}y^iy^jdx^k.\]
\end{lemma}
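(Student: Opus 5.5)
The plan is to compute $\Gamma^X - \Gamma$ directly in the Darboux chart $\left(U, x^1, \cdots, x^{2n}\right)$ and compare it with $\sL_X \Gamma$. Here $\sL_X \Gamma$ means the Lie derivative of the local $\sW$-valued form $\Gamma = \frac{1}{2}\Gamma_{ijk}y^iy^jdx^k$, computed as a derivation using
\[\sL_X y^i = \frac{\partial X^i}{\partial x^j}y^j, \qquad \sL_X dx^i = \frac{\partial X^i}{\partial x^j}dx^j,\]
as in Remark \ref{rmk-L-X} and (\ref{L-X-d-x-i}). Since $\eta_X = \Gamma^X - \Gamma + \sL_X r$ by (\ref{def-eta-X}), it suffices to prove
\[\Gamma^X - \Gamma = \sL_X\Gamma + \frac{1}{2}\omega_{il}\frac{\partial^2 X^l}{\partial x^j\partial x^k}y^iy^jdx^k.\]

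First I compute the Christoffel symbols of $\nabla^X = \nabla + \sL_X\nabla$. Write $\partial_j = \frac{\partial}{\partial x^j}$. Expanding
\[(\sL_X\nabla)(\partial_j,\partial_k) = [X,\nabla_{\partial_j}\partial_k] - \nabla_{[X,\partial_j]}\partial_k - \nabla_{\partial_j}[X,\partial_k]\]
with (\ref{bracket-X-partial-x-i}) gives components
\[X(\Gamma^l_{jk}) - \Gamma^m_{jk}\frac{\partial X^l}{\partial x^m} + \frac{\partial X^m}{\partial x^j}\Gamma^l_{mk} + \frac{\partial X^m}{\partial x^k}\Gamma^l_{jm} + \frac{\partial^2 X^l}{\partial x^j\partial x^k}.\]
Lowering the index with the constant matrix $\omega_{il}$ produces $\Gamma^X_{ijk} - \Gamma_{ijk}$. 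The only term not already in the form $\Gamma_{\cdot\cdot\cdot}$ is $-\omega_{il}\Gamma^m_{jk}\partial_m X^l$. To rewrite it I use the symplectic condition (\ref{L-X-omege-0-local}) together with the antisymmetry of $\omega$, which gives $\omega_{il}\partial_mX^l = \omega_{ml}\partial_iX^l$. Combined with $\omega_{ml}\Gamma^m_{jk} = -\Gamma_{ljk}$, this term becomes $+\frac{\partial X^l}{\partial x^i}\Gamma_{ljk}$. Hence
\[\Gamma^X_{ijk} - \Gamma_{ijk} = X(\Gamma_{ijk}) + \frac{\partial X^l}{\partial x^i}\Gamma_{ljk} + \frac{\partial X^l}{\partial x^j}\Gamma_{ilk} + \frac{\partial X^l}{\partial x^k}\Gamma_{ijl} + \omega_{il}\frac{\partial^2 X^l}{\partial x^j\partial x^k}.\]

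Second, I apply $\sL_X$ as a derivation to $\frac{1}{2}\Gamma_{ijk}y^iy^jdx^k$ and relabel dummy indices. The result is $\frac{1}{2}$ times the first four terms above, contracted with $y^iy^jdx^k$. Multiplying the displayed identity by $\frac{1}{2}y^iy^jdx^k$ and summing then gives exactly the required formula for $\Gamma^X - \Gamma$. Note that the second-derivative term need not be symmetrized in $(i,j)$, since it is contracted with the symmetric $y^iy^j$.

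The main obstacle is the sign and index bookkeeping in the step that converts $-\omega_{il}\Gamma^m_{jk}\partial_mX^l$ into $\partial_iX^l\,\Gamma_{ljk}$. This is the one place where the hypothesis that $X$ is symplectic enters. I will check it carefully against the convention $\Gamma_{ijk} = \omega_{il}\Gamma^l_{jk}$ and the sign convention in (\ref{L-X-omege-0-local}).
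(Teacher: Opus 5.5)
Your proposal is correct and follows essentially the same route as the paper. Like the paper, you compute the Christoffel symbols of $\nabla^X$ and lower the index with the constant matrix $\omega_{il}$. You then use the symplectic condition to turn $-\omega_{il}\Gamma^m_{jk}\partial_m X^l$ into $\frac{\partial X^l}{\partial x^i}\Gamma_{ljk}$, and identify the remaining terms with $\sL_X\Gamma$ via $\sL_X y^i$ and $\sL_X dx^i$.
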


\begin{proof}
    Let $\left(\Gamma^X\right)^k_{ij}$ be the Christoffel symbols of the connection $\nabla^X$ with respect to the coordinates $\left(x^1, \cdots, x^{2n}\right)$, i.e.,
    \[\nabla^X_{\frac{\partial}{\partial x^i}}\frac{\partial}{\partial x^j} = \left(\Gamma^X\right)^k_{ij}\frac{\partial}{\partial x^k}.\]
    Then by (\ref{bracket-X-partial-x-i}), we have
    \[\left(\Gamma^X\right)^l_{jk} = \Gamma^l_{jk} + X\left(\Gamma^l_{jk}\right) - \frac{\partial X^l}{\partial x^p}\Gamma^p_{jk} + \frac{\partial X^p}{\partial x^j}\Gamma^l_{pk} + \frac{\partial X^p}{\partial x^k}\Gamma^l_{jp} + \frac{\partial^2 X^l}{\partial x^j \partial x^k}.\]
    So we have
    \begin{equation*}
        \begin{aligned}
            \Gamma^X_{ijk} = \omega_{il}\left(\Gamma^X\right)^l_{jk} = &\Gamma_{ijk} + X\left(\Gamma_{ijk}\right) - \omega_{il}\frac{\partial X^l}{\partial x^p}\Gamma^p_{jk}\\
            &+ \frac{\partial X^p}{\partial x^j}\Gamma_{ipk} + \frac{\partial X^p}{\partial x^k}\Gamma_{ijp} + \omega_{il}\frac{\partial^2 X^l}{\partial x^j \partial x^k}.
        \end{aligned}
    \end{equation*}
    Since $\sL_X \omega = 0$, we have
    \[\omega_{il}\frac{\partial X^l}{\partial x^p} = \omega_{pl}\frac{\partial X^l}{\partial x^i}.\]
    Thus
    \begin{equation*}
        \begin{aligned}
            \Gamma^X_{ijk} = &\Gamma_{ijk} + X\left(\Gamma_{ijk}\right) - \omega_{il}\frac{\partial X^l}{\partial x^p}\Gamma^p_{jk} + \frac{\partial X^p}{\partial x^j}\Gamma_{ipk} + \frac{\partial X^p}{\partial x^k}\Gamma_{ijp} + \omega_{il}\frac{\partial^2 X^l}{\partial x^j \partial x^k}\\
            = &\Gamma_{ijk} + X\left(\Gamma_{ijk}\right) - \omega_{pl}\frac{\partial X^l}{\partial x^i}\Gamma^p_{jk} + \frac{\partial X^p}{\partial x^j}\Gamma_{ipk} + \frac{\partial X^p}{\partial x^k}\Gamma_{ijp} + \omega_{il}\frac{\partial^2 X^l}{\partial x^j \partial x^k}\\
            = &\Gamma_{ijk} + X\left(\Gamma_{ijk}\right) + \frac{\partial X^l}{\partial x^i}\Gamma_{ljk} + \frac{\partial X^p}{\partial x^j}\Gamma_{ipk} + \frac{\partial X^p}{\partial x^k}\Gamma_{ijp} + \omega_{il}\frac{\partial^2 X^l}{\partial x^j \partial x^k}.
        \end{aligned}
    \end{equation*}
    Hence, we have
    \begin{equation*}
        \begin{aligned}
            \Gamma^X = &\frac{1}{2}\Gamma^X_{ijk}y^iy^jdx^k\\
            = &\Gamma + \frac{1}{2}\left(X\left(\Gamma_{ijk}\right)\right)y^iy^jdx^k + \frac{1}{2}\Gamma_{ljk}\left(\frac{\partial X^l}{\partial x^i}y^i\right)y^jdx^k + \frac{1}{2}\Gamma_{ipk}y^i\left(\frac{\partial X^p}{\partial x^j}y^j\right)dx^k\\
            &+ \frac{1}{2}\Gamma_{ijp}y^iy^j\left(\frac{\partial X^p}{\partial x^k}dx^k\right) + \frac{1}{2}\omega_{il}\frac{\partial^2 X^l}{\partial x^j \partial x^k}y^iy^jdx^k.
        \end{aligned}
    \end{equation*}
    Since
    \begin{equation} \label{L-X-d-x-i-y-i}
        \sL_X dx^i = \frac{\partial X^i}{\partial x^j}dx^j, \quad \sL_X y^i = \frac{\partial X^i}{\partial x^j}y^j, \quad \forall i,
    \end{equation}
    we have
    \begin{equation*}
        \begin{aligned}
            \Gamma^X = &\Gamma + \frac{1}{2}\left(X\left(\Gamma_{ijk}\right)\right)y^iy^jdx^k + \frac{1}{2}\Gamma_{ljk}\left(\sL_X y^l\right)y^jdx^k + \frac{1}{2}\Gamma_{ipk}y^i\left(\sL_X y^p\right)dx^k\\
            &+ \frac{1}{2}\Gamma_{ijp}y^iy^j\left(\sL_X dx^p\right) + \frac{1}{2}\omega_{il}\frac{\partial^2 X^l}{\partial x^j \partial x^k}y^iy^jdx^k\\
            = &\Gamma + \sL_X \Gamma + \frac{1}{2}\omega_{il}\frac{\partial^2 X^l}{\partial x^j \partial x^k}y^iy^jdx^k.
        \end{aligned}
    \end{equation*}
    Therefore,
    \begin{equation*}
        \begin{aligned}
            \eta_X = &\Gamma^X - \Gamma + \sL_X r\\
            = &\sL_X \Gamma + \sL_X r + \frac{1}{2}\omega_{il}\frac{\partial^2 X^l}{\partial x^j \partial x^k}y^iy^jdx^k,
        \end{aligned}
    \end{equation*}
    and the proof is complete.
\end{proof}

\begin{lemma} \label{lemma-u-cocycle}
    The linear map
    \[\eta : \Gamma^{\omega}(TM) \to \sW, \quad X \mapsto \eta_X, \quad \forall X \in \Gamma^{\omega}(TM)\]
    is a Chevalley-Eilenberg $1$-cocycle with respect to the representation
    \[\Gamma^{\omega}(TM) \to {\rm Der}(\sW), \quad X \mapsto \sL_X\]
    of $\Gamma^{\omega}(TM)$ on $\sW$. In other words, we have
    \begin{equation} \label{u-cocycle}
        \sL_X \eta_Y - \sL_Y \eta_X = \eta_{[X, Y]}, \quad \forall X, Y \in \Gamma^{\omega}(TM).
    \end{equation}
\end{lemma}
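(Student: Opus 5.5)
We must show $\sL_X\eta_Y - \sL_Y\eta_X = \eta_{[X,Y]}$. The cleanest route avoids local coordinates and uses the commutator characterization of Proposition \ref{prop-[D-L-X]-eta-X}, $[D,\sL_X] = [\frac{1}{\hbar}\eta_X,\cdot\,]$, together with the Jacobi identity for commutators of operators.

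First, I compute $[D,\sL_{[X,Y]}]$ in two ways. On one hand it equals $[\frac{1}{\hbar}\eta_{[X,Y]},\cdot\,]$. On the other hand, since $\sL_{[X,Y]} = [\sL_X,\sL_Y]$, the graded Jacobi identity gives
\[
[D,[\sL_X,\sL_Y]] = [[D,\sL_X],\sL_Y] + [\sL_X,[D,\sL_Y]].
\]
Here $\sL_X,\sL_Y$ are even derivations of degree $0$ and $D$ has degree $1$. Since $\sL_X$ is a derivation of $\circ$ (Lemma \ref{lemma-L-X}), we have $[\sL_X, [b,\cdot\,]] = [\sL_X b,\cdot\,]$. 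Thus $[\sL_X,[\frac{1}{\hbar}\eta_Y,\cdot\,]] = [\frac{1}{\hbar}\sL_X\eta_Y,\cdot\,]$ and $[[\frac{1}{\hbar}\eta_X,\cdot\,],\sL_Y] = -[\frac{1}{\hbar}\sL_Y\eta_X,\cdot\,]$. Hence
\[
\left[\frac{1}{\hbar}\left(\sL_X\eta_Y - \sL_Y\eta_X - \eta_{[X,Y]}\right), a\right] = 0
\]
for all $a \in \sW\otimes\wedge^\bullet$. The difference $c := \sL_X\eta_Y - \sL_Y\eta_X - \eta_{[X,Y]}$ is therefore central, i.e.\ a scalar $1$-form, so $c = \sigma(c)$.

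It remains to check $\sigma(c) = 0$. Each $\eta_Z \in \sW_2\otimes\wedge^1$, so each $\eta_Z$ has no $y$-degree-zero part with $\hbar^0$. More precisely, $\Gamma^Z - \Gamma$ is quadratic in $y$, and $\sL_Z r$ has the same $y$-structure as $r$, which satisfies $\delta^{-1} r = 0$ and $r\in\sW_3$. The worry is a possible term $\hbar\,\alpha$ with $\alpha$ a scalar $1$-form. In Fedosov's construction, $r$ is built by iterating $\delta^{-1}$, whose image has positive $y$-degree, so $\sigma(r)=0$. Since $\sL_Z$ preserves symmetric degree (Lemma \ref{lemma-L-X}), $\sigma(\sL_Z r) = \sL_Z\sigma(r) = 0$, and $\sigma(\Gamma^Z - \Gamma) = 0$. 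Therefore $\sigma(\eta_Z) = 0$ for all $Z$, and $\sigma(\sL_X \eta_Y) = \sL_X\sigma(\eta_Y) = 0$. Hence $c = \sigma(c) = 0$.

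The main obstacle is the central-ambiguity step: justifying $\sigma(r)=0$ rigorously. I would argue directly from Theorem \ref{thm-Fedosov-1}: applying the decomposition $a = \delta\delta^{-1}a + \delta^{-1}\delta a + a_{00}$ to $r$, with $\delta^{-1}r=0$ and $r_{00}=0$ (since $r$ is a $1$-form), gives $r = \delta^{-1}\delta r$. The image of $\delta^{-1}$ has $y$-degree at least $1$, so $\sigma(r)=0$.

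An alternative, as a cross-check, is a direct local computation using Lemma \ref{lemma-eta-X-local}. One writes $\eta_X = \sL_X(\Gamma + r) + \frac{1}{2}\omega_{il}\partial_j\partial_k X^l\,y^iy^jdx^k$ and verifies that the inhomogeneous term $\theta_X$ satisfies $\sL_X\theta_Y - \sL_Y\theta_X = \theta_{[X,Y]} + [\sL_X,\sL_Y](\Gamma + r) - \sL_{[X,Y]}(\Gamma + r)$. This route is messier, so I would use the commutator argument above.
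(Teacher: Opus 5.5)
Your proof is correct, but it takes a different route from the paper's.

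The paper works in local Darboux coordinates. By Lemma \ref{lemma-eta-X-local} it writes $\eta_X = \sL_X\Gamma + \sL_X r + \lambda_X$ with $\lambda_X = \frac{1}{2}\omega_{il}\frac{\partial^2 X^l}{\partial x^j\partial x^k}y^iy^jdx^k$. It handles the first two terms with $[\sL_X,\sL_Y]=\sL_{[X,Y]}$. Everything then reduces to $\sL_X\lambda_Y-\sL_Y\lambda_X=\lambda_{[X,Y]}$, checked by a brute-force computation with third derivatives of $X,Y$ and the symplectic conditions $\omega_{ki}\partial_jX^k=\omega_{kj}\partial_iX^k$. The paper only asserts this computation is straightforward.

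You instead reuse the mechanism the paper employs to prove $D\eta_X=0$:
\begin{itemize}
\item The operator identity $[D,\sL_Z]=[\frac{1}{\hbar}\eta_Z,\cdot\,]$ plus the Jacobi identity shows $c$ is a scalar $1$-form. Your three bracket computations are right, since $\sL_X,\sL_Y$ are even and act as derivations of the graded commutator.
\item You then show $\sigma(\eta_Z)=0$ to all orders in $\hbar$, using $r=\delta^{-1}\delta r$ (from $\delta^{-1}r=0$ and $r_{00}=0$) and the fact that $\sL_Z$ commutes with $\sigma$. This correctly removes the central ambiguity.
\end{itemize}

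Your version is coordinate-free and avoids the index computation entirely. The cost is that it depends on the whole Fedosov apparatus: $D$, $r$, and the description of the center.

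The paper's computation proves something more basic and independent of $D$. Since $X\mapsto\sL_X r$ is just the Chevalley--Eilenberg coboundary of $r$, the argument really shows that $X\mapsto\Gamma^X-\Gamma$ is already a $1$-cocycle. This is $X\mapsto\sL_X\nabla$ with an index lowered by $\omega$. As a by-product, the paper also gets the explicit local formula for $\eta_X$.
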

\begin{proof}
    It suffices to show that (\ref{u-cocycle}) is true in local Darboux coordinates $\left(U, x^1, \cdots, x^{2n}\right)$. Let $X, Y \in \Gamma^{\omega}(TM)$. Write $X = X^i \frac{\partial}{\partial x^i}$, $Y = Y^i \frac{\partial}{\partial x^i}$ with $X^i, Y^i \in C^{\infty}(U)$. Then by Lemma \ref{lemma-eta-X-local}, we have
    \[\eta_X = \sL_X \Gamma + \sL_X r + \frac{1}{2}\omega_{il}\frac{\partial^2 X^l}{\partial x^j \partial x^k}y^iy^jdx^k,\]
    and
    \[\eta_Y = \sL_Y \Gamma + \sL_Y r + \frac{1}{2}\omega_{il}\frac{\partial^2 Y^l}{\partial x^j \partial x^k}y^iy^jdx^k.\]
    Since
    \[\left[\sL_X, \sL_Y\right] = \sL_X \sL_Y - \sL_Y \sL_X = \sL_{[X, Y]},\]
    we have
    \[\sL_X(\sL_Y \Gamma + \sL_Y r) - \sL_Y(\sL_X \Gamma + \sL_X r) = \sL_{[X, Y]} \Gamma + \sL_{[X, Y]} r.\]
    Denote by
    \[\lambda_X = \frac{1}{2}\omega_{il}\frac{\partial^2 X^l}{\partial x^j \partial x^k}y^iy^jdx^k, \quad \lambda_Y = \frac{1}{2}\omega_{il}\frac{\partial^2 Y^l}{\partial x^j \partial x^k}y^iy^jdx^k.\]
    It remains to show that
    \[\sL_X \lambda_Y - \sL_Y \lambda_X = \lambda_{[X, Y]} = \frac{1}{2}\omega_{il}\frac{\partial^2 [X, Y]^l}{\partial x^j \partial x^k}y^iy^jdx^k,\]
    where we use the notation
    \[[X, Y] = [X, Y]^i\frac{\partial}{\partial x^i}\]
    with $[X, Y]^i \in C^{\infty}(U)$. By (\ref{L-X-d-x-i-y-i}), we have
    \begin{equation} \label{L-X-lambda-Y}
        \begin{aligned}
            \sL_X \lambda_Y = &\frac{1}{2}\omega_{il}X^p\frac{\partial^3 Y^l}{\partial x^p \partial x^j \partial x^k}y^iy^jdx^k + \frac{1}{2}\omega_{il}\frac{\partial^2 Y^l}{\partial x^j \partial x^k}\left(\sL_X y^i\right)y^jdx^k\\
            &+ \frac{1}{2}\omega_{il}\frac{\partial^2 Y^l}{\partial x^j \partial x^k}y^i\left(\sL_X y^j\right)dx^k + \frac{1}{2}\omega_{il}\frac{\partial^2 Y^l}{\partial x^j \partial x^k}y^iy^j\left(\sL_X dx^k\right)\\
            = &\frac{1}{2}\omega_{il}X^p\frac{\partial^3 Y^l}{\partial x^p \partial x^j \partial x^k}y^iy^jdx^k + \frac{1}{2}\omega_{il}\frac{\partial X^i}{\partial x^p}\frac{\partial^2 Y^l}{\partial x^j \partial x^k}y^py^jdx^k\\
            &+ \frac{1}{2}\omega_{il}\frac{\partial X^j}{\partial x^p}\frac{\partial^2 Y^l}{\partial x^j \partial x^k}y^iy^pdx^k + \frac{1}{2}\omega_{il}\frac{\partial X^k}{\partial x^p}\frac{\partial^2 Y^l}{\partial x^j \partial x^k}y^iy^jdx^p.
        \end{aligned}
    \end{equation}
    By symmetry,
    \begin{equation} \label{L-Y-lambda-X}
        \begin{aligned}
            \sL_Y \lambda_X = &\frac{1}{2}\omega_{il}Y^p\frac{\partial^3 X^l}{\partial x^p \partial x^j \partial x^k}y^iy^jdx^k + \frac{1}{2}\omega_{il}\frac{\partial Y^i}{\partial x^p}\frac{\partial^2 X^l}{\partial x^j \partial x^k}y^py^jdx^k\\
            &+ \frac{1}{2}\omega_{il}\frac{\partial Y^j}{\partial x^p}\frac{\partial^2 X^l}{\partial x^j \partial x^k}y^iy^pdx^k + \frac{1}{2}\omega_{il}\frac{\partial Y^k}{\partial x^p}\frac{\partial^2 X^l}{\partial x^j \partial x^k}y^iy^jdx^p.
        \end{aligned}
    \end{equation}
    From the definition,
    \[[X, Y]^i = X^j\frac{\partial Y^i}{\partial x^j} - Y^j\frac{\partial X^i}{\partial x^j},\]
    thus we have
    \begin{equation} \label{partial-square-[XY]}
        \begin{aligned}
            \frac{\partial^2[X, Y]^l}{\partial x^j \partial x^k} = &X^p\frac{\partial^3 Y^l}{\partial x^p \partial x^j \partial x^k} - Y^p\frac{\partial^3 X^l}{\partial x^p \partial x^j \partial x^k}\\
            &+ \frac{\partial X^p}{\partial x^k}\frac{\partial^2 Y^l}{\partial x^j \partial x^p} + \frac{\partial X^p}{\partial x^j}\frac{\partial^2 Y^l}{\partial x^k \partial x^p} - \frac{\partial X^l}{\partial x^p}\frac{\partial^2 Y^p}{\partial x^j \partial x^k}\\
            &- \frac{\partial Y^p}{\partial x^k}\frac{\partial^2 X^l}{\partial x^j \partial x^p} - \frac{\partial Y^p}{\partial x^j}\frac{\partial^2 X^l}{\partial x^k \partial x^p} + \frac{\partial Y^l}{\partial x^p}\frac{\partial^2 X^p}{\partial x^j \partial x^k}.
        \end{aligned}
    \end{equation}
    Since $\sL_X \omega = \sL_Y \omega = 0$, we have
    \begin{equation} \label{X-Y-symplectic-local}
        \omega_{ki}\frac{\partial X^k}{\partial x^j} = \omega_{kj}\frac{\partial X^k}{\partial x^i}, \quad \omega_{ki}\frac{\partial Y^k}{\partial x^j} = \omega_{kj}\frac{\partial Y^k}{\partial x^i}, \quad \forall i, j.
    \end{equation}
    Combining (\ref{L-X-lambda-Y}), (\ref{L-Y-lambda-X}), (\ref{partial-square-[XY]}) and (\ref{X-Y-symplectic-local}), it is straightforward to check that
    \[\sL_X \lambda_Y - \sL_Y \lambda_X = \lambda_{[X, Y]},\]
    and the proof is complete.
\end{proof}

\begin{proposition}
    For $X, Y \in \Gamma^{\omega}(TM)$, we have that $\tau(X, Y) \in \sW_D$.
\end{proposition}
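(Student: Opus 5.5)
The plan is to show $D\tau(X,Y) = 0$ by a direct computation, and then invoke Lemma \ref{lemma-D-W-+} to conclude that $\tau(X,Y)$ has no negative powers of $\hbar$, so that $\tau(X,Y) \in \sW_D$. Since $\tau(X,Y) \in \sW^+$ by construction, this suffices.

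First I compute $D$ of each term of $\tau(X,Y)$, using Proposition \ref{prop-D-u-X-eta-X} ($Du_Z = -\frac{1}{\hbar}\eta_Z$) and Proposition \ref{prop-[D-L-X]-eta-X} ($D\sL_X b = \sL_X Db + [\frac{1}{\hbar}\eta_X, b]$). Since $D$ is a graded derivation with respect to the bracket and $u_X, u_Y$ have form degree $0$,
\[D[u_X,u_Y] = [Du_X,u_Y] + [u_X,Du_Y] = -\tfrac{1}{\hbar}[\eta_X,u_Y] - \tfrac{1}{\hbar}[u_X,\eta_Y].\]
For the Lie derivative terms,
\[D\sL_X u_Y = \sL_X Du_Y + \tfrac{1}{\hbar}[\eta_X,u_Y] = -\tfrac{1}{\hbar}\sL_X\eta_Y + \tfrac{1}{\hbar}[\eta_X,u_Y],\]
and similarly
\[D\sL_Y u_X = -\tfrac{1}{\hbar}\sL_Y\eta_X + \tfrac{1}{\hbar}[\eta_Y,u_X].\]
Finally, $Du_{[X,Y]} = -\frac{1}{\hbar}\eta_{[X,Y]}$.

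Summing these contributions, the bracket terms cancel. The first set from $D[u_X,u_Y]$ gives $-\frac{1}{\hbar}[\eta_X,u_Y] - \frac{1}{\hbar}[u_X,\eta_Y]$. The second set, from $D\sL_Xu_Y - D\sL_Yu_X$, gives $\frac{1}{\hbar}[\eta_X,u_Y] - \frac{1}{\hbar}[\eta_Y,u_X]$. Since $\eta_Y$ is a $1$-form and $u_X$ is a $0$-form, $[\eta_Y,u_X] = -[u_X,\eta_Y]$, so the two sets cancel exactly. What remains is
\[D\tau(X,Y) = \tfrac{1}{\hbar}\left(\eta_{[X,Y]} - \sL_X\eta_Y + \sL_Y\eta_X\right).\]
This vanishes by the cocycle identity of Lemma \ref{lemma-u-cocycle}.

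The only delicate point is the sign bookkeeping for graded commutators between $0$-forms and $1$-forms. I also need to confirm that the identities above, which were stated for $\sW$ or $\sW\otimes\wedge^\bullet$, hold on $\sW^+$. Proposition \ref{prop-[D-L-X]-eta-X} is already stated for $\sW^+\otimes\wedge^\bullet$, and the derivation property of $D$ extends to $\sW^+$, so this causes no difficulty. Having established $D\tau(X,Y)=0$ with $\tau(X,Y)\in\sW^+$, Lemma \ref{lemma-D-W-+} gives $\tau(X,Y)\in\sW_D$.
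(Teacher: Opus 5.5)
Your proof is correct and is essentially the paper's argument. You reduce to $D\tau(X,Y)=0$ via Lemma~\ref{lemma-D-W-+}, expand $D$ of $[u_X,u_Y]-u_{[X,Y]}$ and of $\sL_X u_Y-\sL_Y u_X$ using Propositions~\ref{prop-[D-L-X]-eta-X} and~\ref{prop-D-u-X-eta-X}, cancel the bracket terms via $[\eta_Y,u_X]=-[u_X,\eta_Y]$, and kill the remainder with the cocycle identity of Lemma~\ref{lemma-u-cocycle}, exactly as the paper does.
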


\begin{proof}
    By Lemma \ref{lemma-D-W-+}, it suffices to show that
    \[D(\tau(X, Y)) = 0.\]
    By Lemma \ref{lemma-u-cocycle}, Proposition \ref{prop-[D-L-X]-eta-X} and Proposition \ref{prop-D-u-X-eta-X}, we have
    \begin{equation} \label{equ-D-L-X-u-Y-L-Y-u-X}
        \begin{aligned}
            D(\sL_X u_Y - \sL_Y u_X) = &\sL_X(Du_Y) + \left[\frac{1}{\hbar}\eta_X, u_Y\right] - \sL_Y(D u_X) - \left[\frac{1}{\hbar}\eta_Y, u_X\right]\\
            = &-\sL_X\left(\frac{1}{\hbar}\eta_Y\right) + \sL_Y\left(\frac{1}{\hbar}\eta_X\right) + \left[\frac{1}{\hbar}\eta_X, u_Y\right] - \left[\frac{1}{\hbar}\eta_Y, u_X\right]\\
            = &-\frac{1}{\hbar}\eta_{[X, Y]} + \left[\frac{1}{\hbar}\eta_X, u_Y\right] - \left[\frac{1}{\hbar}\eta_Y, u_X\right],
        \end{aligned}
    \end{equation}
    and
    \begin{equation} \label{equ-D-u-X-u-Y-D-u-XY}
        \begin{aligned}
            D[u_X, u_Y] -D u_{[X, Y]} = &\left[D u_X, u_Y\right] + \left[u_X, D u_Y\right] - D u_{[X, Y]}\\
            = &-\left[\frac{1}{\hbar}\eta_X, u_Y\right] + \left[\frac{1}{\hbar}\eta_Y, u_X\right] + \frac{1}{\hbar}\eta_{[X, Y]}.
        \end{aligned}
    \end{equation}
    By definition, we have
    \[\tau(X, Y) = [u_X, u_Y] - u_{[X, Y]} + \sL_X u_Y - \sL_Y u_X.\]
    Therefore (\ref{equ-D-L-X-u-Y-L-Y-u-X}) and (\ref{equ-D-u-X-u-Y-D-u-XY}) indicate that
    \[D(\tau(X, Y)) = 0,\]
    and the proof is complete.
\end{proof}

From now on, we view $\tau$ as a linear map
\[\tau : \wedge^2 \Gamma^{\omega}(TM) \to \sW_D.\]

\begin{proposition} \label{prop-cocycle-tau}
    The linear maps
    \[\Gamma^{\omega}(TM) \to {\rm Der}\left(\sW_D\right), \quad X \mapsto \widehat{X}, \quad \forall X \in \Gamma^{\omega}(TM)\]
    and
    \[\tau :\wedge^2 \Gamma^{\omega}(TM) \to \sW_D\]
    satisfy the conditions
    \begin{equation} \label{condition-XY-YX}
        \widehat{X}\widehat{Y}a - \widehat{Y}\widehat{X}a - \widehat{[X, Y]}a = [\tau(X, Y), a]
    \end{equation}
    and
    \begin{equation} \label{condition-tau-XY-Z}
        \tau([X, Y], Z) + \tau([Y, Z], X) + \tau([Z, X], Y) = \widehat{X}(\tau(Y, Z)) + \widehat{Y}(\tau(Z, X)) + \widehat{Z}(\tau(X, Y))
    \end{equation}
    for all $X, Y, Z \in \Gamma^{\omega}(TM)$ and all $a \in \sW_D$.
\end{proposition}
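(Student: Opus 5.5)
The plan is to prove both identities directly in the larger algebra $\sW^+$, where $\widehat{X}=\sL_X+[u_X,\cdot\,]$ is defined for every $X$, and then restrict to $\sW_D$. This is legitimate because each $\widehat{X}$ preserves $\sW_D$ by Theorem \ref{thm-deformation-quantization-symplectic-vector-fields}, and $\tau(X,Y)\in\sW_D$ by the preceding proposition.

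\emph{Step 1: identity (\ref{condition-XY-YX}).} I will expand $[\widehat{X},\widehat{Y}]$ on $\sW^+$ into three kinds of terms.
\begin{itemize}
\item $[\sL_X,\sL_Y]=\sL_{[X,Y]}$.
\item $[\sL_X,[u_Y,\cdot\,]]=[\sL_X u_Y,\cdot\,]$, which holds because $\sL_X$ is a derivation of the Moyal product (Lemma \ref{lemma-L-X}, applied on $\sW^+$). The term with $X$ and $Y$ interchanged is handled the same way.
\item $[[u_X,\cdot\,],[u_Y,\cdot\,]]=[[u_X,u_Y],\cdot\,]$, which is the Jacobi identity in the associative algebra $\sW^+$.
\end{itemize}
Collecting terms reproduces the formula displayed before the definition of $\tau$:
\[
[\widehat{X},\widehat{Y}]=\widehat{[X,Y]}+[\tau(X,Y),\cdot\,].
\]
Evaluating at $a\in\sW_D$ gives (\ref{condition-XY-YX}).

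\emph{Step 2: expand the right-hand side of (\ref{condition-tau-XY-Z}).} I write $\widehat{X}(\tau(Y,Z))=\sL_X\tau(Y,Z)+[u_X,\tau(Y,Z)]$, insert the definition of $\tau$, and take the cyclic sum over $(X,Y,Z)$. The terms are treated as follows.
\begin{itemize}
\item The terms $[u_X,[u_Y,u_Z]]$ sum to zero by Jacobi.
\item The second-order Lie derivative terms regroup as $\sL_X\sL_Y u_Z-\sL_Y\sL_X u_Z=\sL_{[X,Y]}u_Z$. Their cyclic sum is therefore $\sum_{\rm cyc}\sL_{[X,Y]}u_Z$.
\item The remaining terms are $-\sL_X u_{[Y,Z]}$ and $-[u_X,u_{[Y,Z]}]$, plus the mixed terms $[\sL_X u_Y,u_Z]+[u_Y,\sL_X u_Z]+[u_X,\sL_Y u_Z]-[u_X,\sL_Z u_Y]$, each summed cyclically.
\end{itemize}

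\emph{Step 3: expand the left-hand side of (\ref{condition-tau-XY-Z}) and match.} I expand $\tau([X,Y],Z)=[u_{[X,Y]},u_Z]-u_{[[X,Y],Z]}+\sL_{[X,Y]}u_Z-\sL_Z u_{[X,Y]}$ cyclically.
\begin{itemize}
\item The terms $u_{[[X,Y],Z]}$ cancel by the Jacobi identity in $\Gamma^\omega(TM)$ together with linearity of $X\mapsto u_X$. Linearity holds by the remark after Theorem \ref{thm-deformation-quantization-symplectic-vector-fields}: $\eta$ is linear and $u_X$ is the unique solution of the linear recursion (\ref{def-u-X}).
\item $\sum_{\rm cyc}\sL_{[X,Y]}u_Z$ appears on both sides.
\item $-\sum_{\rm cyc}\sL_Z u_{[X,Y]}=-\sum_{\rm cyc}\sL_X u_{[Y,Z]}$ after a cyclic relabelling.
\item $\sum_{\rm cyc}[u_{[X,Y]},u_Z]=-\sum_{\rm cyc}[u_X,u_{[Y,Z]}]$ by antisymmetry of the bracket and relabelling.
\end{itemize}

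\emph{Step 4: the mixed terms cancel.} This is the only real bookkeeping. I rewrite every mixed term in the normal form $[\sL_A u_B,u_C]$ with $(A,B,C)$ a permutation of $(X,Y,Z)$. The cyclic sum of $[\sL_X u_Y,u_Z]+[u_Y,\sL_X u_Z]$ equals
\[
\sum_{\pi\in S_3}{\rm sgn}(\pi)\,[\sL_{\pi X}u_{\pi Y},u_{\pi Z}].
\]
The cyclic sum of $[u_X,\sL_Y u_Z]-[u_X,\sL_Z u_Y]$ equals
\[
-[\sL_Y u_Z,u_X]+[\sL_Z u_Y,u_X]
\]
summed cyclically, which is exactly minus the same alternating sum. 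So the mixed terms cancel, and (\ref{condition-tau-XY-Z}) follows. I expect this sign check to be the main place to be careful, but it is purely combinatorial.
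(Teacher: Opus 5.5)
Your proposal is correct and is the same direct computation the paper has in mind: both identities are verified in $\sW^+$ using $[\sL_X,\sL_Y]=\sL_{[X,Y]}$, the fact that $\sL_X$ is a derivation, and the Jacobi identities for vector fields and for the commutator, and your sign bookkeeping for the mixed terms $[\sL_A u_B,u_C]$ checks out. Beyond the paper's ``straightforward computation,'' you also make two points explicit: that $X\mapsto u_X$ is linear, which is needed to cancel $\sum_{\rm cyc}u_{[[X,Y],Z]}$, and that computing in $\sW^+$ and then restricting to $\sW_D$ is legitimate.
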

\begin{proof}
    The condition (\ref{condition-XY-YX}) is clear from the definition of $\tau$. Using the fact that
    \[[\sL_X, \sL_Y] = \sL_X\sL_Y - \sL_Y\sL_X = \sL_{[X, Y]}, \quad \forall X, Y \in \Gamma(TM),\]
    the Jacobi identity of the Lie bracket of smooth vector fields and the Jacobi identity of the commutator bracket $\left(\sW^+, [\cdot\,, \cdot]\right)$, we can check the condition (\ref{condition-tau-XY-Z}) by straightforward computation.
\end{proof}

The conditions (\ref{condition-XY-YX}) and (\ref{condition-tau-XY-Z}) in Proposition \ref{prop-cocycle-tau} can be extracted to an abstract definition.

\begin{definition} \cite[Example 7.1.7]{Mo}
    Let $\fg$ be a Lie algebra over a field $\F$. Let $A$ be an algebra over $\F$. A cocycle action of $\fg$ on $A$ is a linear map
    \[\rho : \fg \to {\rm Der}(A), \quad x \mapsto \rho_x, \quad \forall x \in \fg,\]
    together with a linear map
    \[\tau: \wedge^2 \fg \to A\]
    such that
    \begin{equation} \label{condition-delta-cocycle}
        \rho_x\rho_y a - \rho_y\rho_x a - \rho_{[x, y]}a = [\tau(x, y), a],
    \end{equation}
    and
    \begin{equation} \label{condition-tau-cocycle}
        \tau\left([x, y], z\right) + \tau\left([y, z], x\right) + \tau\left([z, x], y\right) = \rho_x\left(\tau\left(y, z\right)\right) + \rho_y\left(\tau\left(z, x\right)\right) + \rho_z\left(\tau\left(x, y\right)\right)
    \end{equation}
    for all $x, y, z \in \fg$ and all $a \in A$. The map $\tau$ is called a non-abelian $2$-cocycle on $\fg$ with values in $A$, and $\rho$ is called a cocycle action.
\end{definition}

\begin{remark}
    If $A$ is commutative, then the condition (\ref{condition-delta-cocycle}) is equivalent to say that
    \[\rho : \fg \to {\rm Der}(A)\]
    is a Lie algebra homomorphism. Then the condition (\ref{condition-tau-cocycle}) is equivalent to say that $\tau$ is a Chevalley-Eilenberg $2$-cocycle with respect to the representation
    \[\rho : \fg \to {\rm Der}(A).\]
\end{remark}

It was proved in \cite{Mo} that a cocycle action $(\rho, \tau)$ of $\fg$ on $A$ provides a cleft extension of $A$ by $\sU(\fg)$. We now briefly recall the construction. First we consider the following bracket defined on $A \oplus \fg$
\[[(a, x), (b, y)] \coloneqq \left([a, b] + \rho_x(b) - \rho_y(a) + \tau(x, y), [x, y]\right), \quad \forall a, b \in A, \,\, \forall x, y \in \fg.\]
It is straightforward that the above bracket a Lie bracket. Hence we obtain a Lie algebra structure on $A \oplus \fg$ with the above Lie bracket. We denote this Lie algebra by $A \times_{\tau} \fg$. Then we have an exact sequence of Lie algebras
\[0 \to A \to A \times_{\tau} \fg \to \fg \to 0.\]
Here we view $A$ as a Lie algebra with the commutator bracket. The map $A \to A \times_{\tau} \fg$ is the natural inclusion, and the map $A \times_{\tau} \fg \to \fg$ is the natural projection. Let $I \subset \sU(A \times_{\tau} \fg)$ be the ideal of $\sU(A \times_{\tau} \fg)$ generated by the subset $\{1_{\sU(A)} - 1_A, a \cdot b - ab | a, b \in A\}$. Here in the notation $\sU(A)$, we view $A$ as a Lie algebra with the commutator bracket. It is clear that $1_{\sU(A)} = 1_{\sU(A \times_{\tau} \fg)}$ via the natural inclusion $\sU(A) \subset \sU(A \times_{\tau} \fg)$. The notation $1_A$ denotes the unit of $A$ as an algebra. The notation $a \cdot b$ means the product of $a$ and $b$ in $\sU(A) \subset \sU(A \times_{\tau} \fg)$, and the notation $ab$ denotes the algebra product of $a$ and $b$ in the algebra $A$. Then let
\[A \lcross_{\tau} \sU(\fg) \coloneqq \sU(A \times_{\tau} \fg) / I.\]
Note that it is easy to prove
\[\sU(A)/ I \cong A\]
as algebras.

It was proved in \cite{Mo} that given a short exact sequence
\[0 \to \fh \to \fg \to \fg / \fh \to 0\]
of Lie algebras, the universal enveloping algebra $\sU(\fg)$ is a cleft extension of $\sU(\fh)$ by $\sU(\fg / \fh)$. Hence we have
\[\sU(\fg) \cong \sU(\fh) {_{\chi}\lcross} \sU(\fg / \fh),\]
where $\chi : \sU(\fg / \fh) \otimes \sU(\fg / \fh) \to \sU(\fh)$ is a convolution-invertible $2$-cocycle in a cocycle left $\sU(\fg / \fh)$ action on $\sU(\fh)$. Applying the above discussion to the exact sequence
\[0 \to A \to A \times_{\tau} \fg \to \fg \to 0,\]
we have that
\[\sU(A \times_{\tau} \fg) \cong \sU(A) {_{\chi'}\lcross} \sU(\fg)\]
for some convolution-invertible $2$-cocycle $\chi' : \sU(\fg) \otimes \sU(\fg) \to \sU(A)$. Moreover, the explicit proof in \cite{Mo} indicates that the above isomorphism is compatible with the algebra isomorphism $\sU(A) / I \cong A$. Therefore, we have
\[A \lcross_{\tau} \sU(\fg) \cong A {_{\chi}\lcross} \sU(\fg)\]
for some convolution-invertible $2$-cocycle $\chi : \sU(\fg) \otimes \sU(\fg) \to A$.

Applying the above discussion on abstract cocycle actions to the quantization of symplectic vector fields in Theorem \ref{thm-deformation-quantization-symplectic-vector-fields}, we obtain the following main result of this paper.

\begin{theorem}
    Let $(M, \omega, \nabla)$ be a Fedosov manifold, and $\fg$ a Lie algebra acting on $C^{\infty}(M)$ by symplectic vector fields. Let $(C^{\infty}(M)[[\hbar]], \star)$ be the Fedosov deformation quantization given by (\ref{star-product-Fedosov}). Then we have a formal deformation $\left(C^{\infty}(M) \lcross \sU(\fg) [[\hbar]], \tilde{\star}\right)$ of the cross product algebra $C^{\infty}(M) \lcross \sU(\fg)$ which coincides with $\star$ on $C^{\infty}(M)[[\hbar]]$ via the inclusion
    \[C^{\infty}(M)[[\hbar]] \to C^{\infty}(M) \lcross \sU(\fg) [[\hbar]], \quad f \mapsto f \otimes 1, \quad \forall f \in C^{\infty}(M)[[\hbar]].\]
\end{theorem}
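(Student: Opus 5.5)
We plan to realize the deformation as the cocycle cross product coming from the abstract construction recalled above, applied to the cocycle action built in Section 3. Let $\Phi:\fg\to\Gamma^{\omega}(TM)$ be the Lie algebra homomorphism giving the action, and set $A=\sW_D\cong (C^{\infty}(M)[[\hbar]],\star)$. We define $\rho_x=\widehat{\Phi(x)}$ and $\tau_{\fg}(x,y)=\tau(\Phi(x),\Phi(y))$. Since $\Phi$ preserves brackets and $X\mapsto\widehat X$, $\tau$ are linear, conditions (\ref{condition-XY-YX}) and (\ref{condition-tau-XY-Z}) of Proposition \ref{prop-cocycle-tau} pull back directly to (\ref{condition-delta-cocycle}) and (\ref{condition-tau-cocycle}). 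Hence $(\rho,\tau_{\fg})$ is a cocycle action of $\fg$ on $A$ in the sense of \cite[Example 7.1.7]{Mo}. We work over $\F=\R$, treating $A$ as an $\R[[\hbar]]$-algebra, and note that $\rho$ and $\tau_{\fg}$ are $\R[[\hbar]]$-linear.

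By the discussion above, we obtain the algebra $A\lcross_{\tau_{\fg}}\sU(\fg)\cong A{_{\chi}\lcross}\sU(\fg)$ with $\chi$ convolution-invertible. Its underlying space is $A\otimes\sU(\fg)$, which, after $\hbar$-adic completion, we identify with $C^{\infty}(M)\lcross\sU(\fg)[[\hbar]]$ through $\sigma\otimes{\rm id}$. We transport the product to define $\tilde\star$. Because $A\subset A{_{\chi}\lcross}\sU(\fg)$ via $a\mapsto a\otimes 1$, and $\chi(1\otimes h)=\varepsilon(h)1$, the restriction of $\tilde\star$ to $C^{\infty}(M)[[\hbar]]\otimes 1$ is exactly $\star$.

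It remains to show that $\tilde\star$ is a formal deformation, i.e.\ that $\tilde\star$ is $\hbar$-adically continuous and reduces mod $\hbar$ to the classical cross product. Here we plan to use Theorem \ref{thm-deformation-quantization-symplectic-vector-fields}, which gives $\widetilde X=X+O(\hbar)$, so the cocycle action reduces to the classical action mod $\hbar$. The key point is that $\tau_{\fg}\equiv 0$ mod $\hbar$. We would first try to argue this by degree. Since $\eta_X\in\sW_2\otimes\wedge^1$ and $u_X\in\sW_1^+$, we have $\sigma(\tau(X,Y))=\sigma(\sL_X u_Y-\sL_Y u_X)+\sigma([u_X,u_Y])-\sigma(u_{[X,Y]})$. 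We then need to check that $u_X$ has no $\hbar^0$ scalar part. The $\hbar^{-1}$ terms in $u_X$ come from $\frac1\hbar\eta_X$, and we must verify that, after $\delta^{-1}$ and iteration, they contribute only terms of positive $y$-degree. In that case $\sigma(\tau)\in\hbar\,C^{\infty}(M)[[\hbar]]$, possibly after an adjustment by central scalars. If this fails, we would modify $u_X$ by adding a scalar function $c_X$; this does not change $\widehat X$, and it alters $\tau$ by $\sL_Xc_Y-\sL_Yc_X-c_{[X,Y]}$, which would be chosen to kill the $\hbar^0$ part.

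Once $\tau_{\fg}=O(\hbar)$, the explicit construction of $\chi$ in \cite{Mo} (built from $\tau_{\fg}$ on $\fg\otimes\fg$ and extended via the PBW filtration) gives $\chi\equiv\varepsilon\otimes\varepsilon$ mod $\hbar$. The product formula of the cocycle cross product therefore reduces mod $\hbar$ to the cross product formula for $C^{\infty}(M)\lcross\sU(\fg)$. The main obstacles are this mod-$\hbar$ control of $\tau$ and $\chi$, together with the $\hbar$-adic completeness bookkeeping, since $A\otimes\sU(\fg)$ must be completed to $C^{\infty}(M)\lcross\sU(\fg)[[\hbar]]$ and the identification must be compatible with the isomorphism from \cite{Mo}.
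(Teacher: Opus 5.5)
Your route is the paper's: pull back $X\mapsto\widehat{X}$ and $\tau$ along $\Phi$ to a cocycle action of $\fg$ on $\sW_D\cong(C^{\infty}(M)[[\hbar]],\star)$, form the cocycle cross product, and reduce mod $\hbar$. At that last step the paper simply asserts $\tilde\tau(x,y)|_{\hbar=0}=0$.

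\textbf{The gap.} The step you yourself call the key point, $\tau_{\fg}\equiv 0 \bmod \hbar$, is never established, and the check you outline aims at the wrong terms.

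The three terms of $\sigma(\tau(X,Y))$ that are linear in $u$ vanish identically, so no analysis of the $\hbar^{-1}$ part is needed there. By its defining equation $u_X=\delta^{-1}(\cdots)$, and $\delta^{-1}$ multiplies by $y^i$. Hence every term of $u_X$ has positive $y$-degree and $\sigma(u_X)=0$ to all orders in $\hbar$. Likewise $\sigma(\sL_X u_Y)=\sL_X\sigma(u_Y)=0$, since $\sL_X$ preserves $y$-degree.

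The only term that can contribute is $\sigma([u_X,u_Y])$, which you do not examine. Your conclusion ``in that case $\sigma(\tau)\in\hbar\,C^{\infty}(M)[[\hbar]]$'' does not follow from positivity of $y$-degree. A full contraction in the Moyal commutator produces $y$-degree-zero terms, and because $u_X,u_Y$ contain $\hbar^{-1}$, you must rule out scalar terms with non-positive powers of $\hbar$.

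\textbf{The fix.} Use the total-degree filtration.
\begin{enumerate}
\item The Moyal product is homogeneous for total degree, so $[\sW^+_1,\sW^+_1]\subset\sW^+_2$.
\item $\sL_X$ preserves $\sW^+_1$, so $\tau(X,Y)\in\sW^+_1$.
\item A $y$-degree-zero term $\hbar^{l}$ in $\sW^+_1$ has $2l\geq 1$, i.e.\ $l\geq 1$.
\end{enumerate}
Hence $\sigma(\tau(X,Y))\in\hbar\,C^{\infty}(M)[[\hbar]]$. Concretely, an $\hbar^{-1}$-term of $u_X$ has $y$-degree at least $3$, so its full contraction with such a term of $u_Y$ carries $\hbar^{q-2}$ with $q\geq 3$.

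\textbf{The fallback does not work.} You should drop it rather than hold it in reserve. For a scalar function $c$ one has $Dc=dc$. Replacing $u_X$ by $u_X+c_X$ shifts $\tau(X,Y)$ by the scalar $Xc_Y-Yc_X-c_{[X,Y]}$. The new $\tau$ then lies in $\sW_D$ only if that scalar is locally constant. Killing a non-constant $\hbar^0$ term this way would take $\tau$ out of $A=\sW_D$, and you would no longer have a cocycle action on $A$.

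With the filtration argument supplied, the rest of your proposal is at the paper's level of detail. This covers the reduction of $\rho$ and $\chi$ mod $\hbar$, and the restriction of $\tilde\star$ to $C^{\infty}(M)[[\hbar]]\otimes 1$ via $\chi(1\otimes h)=\varepsilon(h)1$. It also covers the $\hbar$-adic completion of $A\otimes\sU(\fg)$, which is handled by the $\R[[\hbar]]$-linearity of the structure maps.
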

\begin{proof}
    Let
    \[\Phi : \fg \to \Gamma^{\omega}(TM)\]
    be the action of $\fg$ on $C^{\infty}(M)$. Define a linear map
    \[\tilde{\rho} : \fg \to {\rm Der}(C^{\infty}(M)[[\hbar]], \star), \quad x \mapsto \tilde{\rho}_x, \quad \forall x \in \fg\]
    by
    \[\tilde{\rho}_x = \widetilde{\Phi(x)}, \quad \forall x \in \fg,\]
    where $\widetilde{\Phi(x)}$ is the quantization of $\Phi(x)$ in Theorem \ref{thm-deformation-quantization-symplectic-vector-fields}. Let
    \[\tau : \wedge^2 \Gamma^{\omega}(TM) \to \sW_D\]
    be the linear map in Proposition \ref{prop-cocycle-tau}. Define a linear map
    \[\tilde{\tau} : \wedge^2 \fg \to C^{\infty}(M)[[\hbar]]\]
    by
    \[\tilde{\tau}(x, y) = \sigma(\tau(\Phi(x), \Phi(y))), \quad \forall x, y \in \fg,\]
    where
    \[\sigma : \sW_D \to C^{\infty}(M)[[\hbar]]\]
    is the isomorphism in the definition of $\star$. Then by Proposition \ref{prop-cocycle-tau}, we have that $\tilde{\rho}$ and $\tilde{\tau}$ provide a non-abelian $2$-cocycle on $\fg$ with values in $(C^{\infty}(M)[[\hbar]], \star)$. Then the above discussion on cocycle cross product indicates that we have a cocycle cross product
    \[C^{\infty}(M)[[\hbar]] {_{\chi} \lcross} \sU(\fg)\]
    of the algebra $(C^{\infty}(M)[[\hbar]], \star)$. Denote $\tilde{\star}$ the product of $C^{\infty}(M)[[\hbar]] {_{\chi} \lcross} \sU(\fg)$. Since
    \[\widetilde{\Phi(x)}|_{\hbar = 0} = \Phi(x), \quad \forall x \in \fg,\]
    and
    \[\tilde{\tau}(x, y)|_{\hbar = 0} = 0, \quad \forall x, y \in \fg,\]
    we have that $\tilde{\star}|_{\hbar = 0}$ coincides with the product of the algebra $C^{\infty}(M) \lcross \sU(\fg)$. Therefore, $\left(C^{\infty}(M)[[\hbar]] {_{\chi} \lcross} \sU(\fg), \tilde{\star}\right)$ is a formal deformation of $C^{\infty}(M) \lcross \sU(\fg)$ which coincides with $\star$ on $C^{\infty}(M)[[\hbar]]$.
\end{proof}

\bigskip
Haoyuan Gao (hygao@math.ecnu.edu.cn)

School of Mathematical Sciences, Key Laboratory of MEA (Ministry of Education) $\&$ Shanghai Key Laboratory of PMMP, East China Normal University, Shanghai 200241, China


\end{document}